\documentclass[11pt]{amsart}
\usepackage{amsmath}
\usepackage{amsthm}
\usepackage{amssymb}
\usepackage{amscd}
\usepackage{amsbsy}
\usepackage{epsfig}

\theoremstyle{plain}
\newtheorem{thm}{Theorem}%[section]

\newtheorem{lem}{Lemma}%[section]
%[section]
\newtheorem{prop}{Proposition}%[section]

\newtheorem{theoalph}{Theorem}

\theoremstyle{definition}
\newtheorem{definition}{Definition}

\renewcommand{\=}{ : = }

\newcommand{\hW}{\widehat{W}}
\DeclareMathOperator{\mmod}{mod}
\DeclareMathOperator{\dist}{dist}
\DeclareMathOperator{\diam}{diam}
\DeclareMathOperator{\HD}{HD}
\DeclareMathOperator{\hyp}{hyp}
\DeclareMathOperator{\HDhyp}{\HD_{\hyp}}

\newcommand{\cN}{\mathcal{N}}
\newcommand{\cP}{\mathcal{P}}
\newcommand{\cS}{\mathcal{S}}

\newcommand{\tB}{\widetilde{B}}

\newcommand{\C}{\mathbb{C}}

\newcommand{\eps}{\varepsilon}

\newcommand{\comSSSS}[1]{\marginpar{\tiny QQQ}}

\newcommand{\bfn}{\textbf{n}}
\newcommand{\borbit}{\mathcal{O}^-(0)}

\begin{document}

\title{On Poincar{\'e} series of unicritical polynomials at the critical point}

\author[J. Rivera-Letelier]{Juan Rivera-Letelier$^\dag$}
\address{Juan Rivera-Letelier, Facultad de Matem{\'a}ticas, Pontificia Universidad Cat{\'o}lica de Chile, Avenida Vicu{\~n}a Mackenna~4860, Santiago, Chile}
\email{riveraletelier@mat.puc.cl}
\thanks{$^\dag$ Partially supported by FONDECYT grant 1100922, Chile.}

\author[W. Shen]{Weixiao Shen$^\ddag$}
\address{Weixiao Shen, Block S17, 10 Lower Kent Ridge Road, Singapore~119076, Singapore}
\email{matsw@nus.edu.sg}
\thanks{$^\ddag$ Partially supported by National University of Singapore grant C-146-000-032-001}

\begin{abstract}
In this paper, we show that for a unicritical polynomial having \emph{a priori} bounds, the unique conformal measure of minimal exponent has no atom at the critical point.
For a conformal measure of higher exponent, we give a necessary and sufficient condition for the critical point to be an atom, in terms of the growth rate of the derivatives at the critical value.
\end{abstract}

\date{}

\maketitle

\section{Introduction}
Let $f:\C\to\C$ be a polynomial of degree at least~$2$, and let $J(f)$ denote its Julia set.
An important tool to study the fractal dimensions of the Julia set is the Patterson-Sullivan conformal measures.
Given~$t > 0$, a \emph{conformal measure of exponent~$t$ for~$f$} is a Borel probability measure~$\mu$ supported on~$J(f)$, such that for each Borel set~$E$ on which~$f$ is injective we have
$$ \mu(f(E)) = \int_E |Df|^t \ d\mu. $$
Sullivan showed that for every polynomial there is~$t > 0$ and a conformal measure of~$f$ of exponent~$t$, see~\cite{Sul83}.
Moreover, Denker, Przytycki, and Urba{\'n}ski showed that the minimal exponent for which such a measure exists coincides with the \emph{hyperbolic dimension of~$f$}, defined by
$$ \HDhyp(f) \= \{ \HD(X) : X \emph{ hyperbolic set of~$f$} \}, $$
where~$\HD(X)$ denotes the Hausdorff dimension of~$X$, see~\cite{DenUrb91c,Prz93}, and also~\cite{McM00,PrzUrb10,Urb03b}.
In the uniformly hyperbolic case, there is a unique conformal measure of minimal exponent, and this measure coincides with the normalized Hausdorff measure of dimension equal to~$\HD(J(f))$.
Under certain non-uniformly hyperbolicity assumptions, it has also been proved that there is a unique conformal measure of minimal exponent, and that this measure is supported on the ``conical Julia set'', which roughly speaking is the expanding part of the Julia set, see for example~\cite{Prz99,GraSmi09,PrzRiv07,RivShe1004,Urb03b} and references therein.
In these cases, the conformal measure of minimal exponent is used to prove that the hyperbolic dimension coincides with the Hausdorff and the box dimensions of the Julia set.
However, there are conformal measures of minimal exponent that are not supported on the conical Julia set, see~\cite{Urb03b}.

In this paper, we shall study atoms of conformal measures of polynomials having precisely~$1$ critical point; we call such a polynomial \emph{unicritical}.
Note that if~$f$ is a unicritical polynomial, then its degree~$d$ is at least~$2$, and there is~$c$ in~$\C$ such that~$f$ is affine conjugate to the polynomial~$z^d + c$.
A unicritical polynomial written in this form is \emph{normalized}.
We shall make the following assumption.

\begin{definition}
\label{def:apriorib}
Let~$f$ be a unicritical polynomial whose critical point is non-periodic and recurrent.
Assume for simplicity that~$f$ is normalized, so its critical point is~$0$.
Then we say~$f$ has \emph{a priori bounds}, if there exists $\tau > 0$ such that for each $\eps>0$ there exists a topological disk~$V$ containing~$0$, satisfying $\diam(V)<\eps$, and such that the following holds: For each integer~$n\ge 1$ such that~$f^n(0)$ is in~$V$, the connected component~$U$ of $f^{-n}(V)$ that contains~$0$ satisfies $\overline{U} \subset V$, and there is annulus~$A$ contained in~$V \setminus \overline{U}$, enclosing~$U$, and whose modulus is at least~$\tau$.
\end{definition}

Examples of unicritical polynomials having \emph{a priori} bounds include at most finitely renormalizable polynomials without neutral cycles~\cite{Hub93,KahLyu09b,KozvSt09}, all real polynomials~\cite{LevvSt98,LevvSt00a}, and some infinitely renormalizable complex polynomials~\cite{KahLyu08,Lyu97}.
Recall that for an integer~$s \ge 1$, a unicritical polynomial~$f$ of degree~$d \ge 2$ is \emph{renormalizable of period~$s$}, if there are Jordan disks~$U$ and~$V$ such that~$\overline{U}$ is contained in~$V$, and such that the following properties hold:
\begin{itemize}
\item
$f^s : U \to V$ is $d$-to-$1$;
\item
$U$ contains the critical point of~$f$, and for each~$j$ in~$\{1, \ldots, s - 1 \}$ the set~$f^j(U)$ does not contain it;
\item
The set~$\{z \in U : f^{sn}(z) \in U, n = 0, 1, \ldots \}$ is a connected proper subset of~$J(f)$.
\end{itemize}
Moreover, $f$ is \emph{infinitely renormalizable}, if there are infinitely many integers~$s$ such that~$f$ is renormalizable of period~$s$.

In the statement of the following theorem we use the fact that for a unicritical polynomial~$f$ having \emph{a priori} bounds there is a unique conformal measure of exponent~$t = \HDhyp(f)$ for~$f$, see~\cite[Theorem~$1$]{Pra98}.

\begin{theoalph}
\label{thm:main}
Let~$f$ be a unicritical polynomial having \emph{a priori} bounds.
Then the conformal measure~$\mu$ of minimal exponent of~$f$ does not have an atom at the critical point of~$f$.
\end{theoalph}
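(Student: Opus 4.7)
Argue by contradiction: suppose $\mu(\{0\}) = a > 0$, set $t_0 := \HDhyp(f)$, and derive a contradiction.

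\emph{Propagation and pure atomicity.} Applying the conformality relation to $E = \{0\}$ gives $\mu(\{f(0)\}) = |Df(0)|^{t_0}\mu(\{0\}) = 0$, and iterating (using non-periodicity of $0$) yields $\mu(\{f^n(0)\}) = 0$ for every $n \geq 1$. Conversely, for each $n \geq 1$ and each $z \in f^{-n}(0)$ whose orbit $\{z, f(z), \ldots, f^{n-1}(z)\}$ avoids $0$ (a \emph{good} preimage of $0$), the map $f^n$ is locally injective at $z$, and iterated conformality gives $\mu(\{z\}) = a\,|Df^n(z)|^{-t_0}$. Decomposing $\mu = \mu_{\mathrm{at}} + \mu_c$ into atomic and continuous parts, one verifies that $\mu_{\mathrm{at}}$---and hence $\mu_c = \mu - \mu_{\mathrm{at}}$---is itself $t_0$-conformal, using that away from the critical point $f$ sends atoms to atoms with $\mu$-mass rescaled by $|Df|^{t_0}$, while the only ``loss'' occurs at $0$ where $|Df|^{t_0} = 0$. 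If $\mu_c$ were non-zero, its normalization would be a second $t_0$-conformal probability measure distinct from $\mu$, contradicting uniqueness from~\cite{Pra98}. Hence $\mu$ is purely atomic, which forces the Poincar\'e series at the critical point,
\[
P(t_0) \= \sum_{n \geq 1} \sum_{z \in f^{-n}(0),\ \text{good}} |Df^n(z)|^{-t_0},
\]
to be finite; indeed $P(t_0) \leq 1/a - 1$.

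\emph{Divergence of $P(t_0)$ via \emph{a priori} bounds.} To reach a contradiction I aim to show $P(t_0) = \infty$. Using the \emph{a priori} bounds, fix a nice topological disk $V$ around $0$ for which every return of $0$ to $V$ yields a polynomial-like return $f^n : U \to V$ of degree $d$ with $\mathrm{mod}(V \setminus \overline U) \geq \tau$, and consider the first-return map $R : W \to V$ to $V$: every component of $W$ maps either univalently onto $V$ or, in the case of the critical component containing $0$, with degree $d$. The modulus bound $\tau$ provides uniform Koebe distortion along all iterates of the univalent branches of $R$. Iterating $R$ then produces, inside $V$, a Markov tree of good preimages of $0$; modulo bounded distortion, its contribution to $P(t_0)$ reproduces the Poincar\'e series at exponent $t_0$ of the induced Markov dynamics.

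\emph{Main obstacle.} The hard part is showing that this induced series diverges at the specific exponent $t_0 = \HDhyp(f)$. The expected mechanism is that the induced Markov system carries the full hyperbolic dynamics of $f$ on $J(f)$ away from the critical orbit, so its hyperbolic dimension equals $\HDhyp(f) = t_0$; Bowen's formula for the (uniformly hyperbolic) induced system then identifies $t_0$ as the zero of its topological pressure, forcing the induced Poincar\'e series, and hence $P(t_0)$, to be infinite. The \emph{a priori} bounds are essential both to produce the full-shift Markov structure (via polynomial-like first returns to arbitrarily small $V$) and to guarantee the uniform Koebe distortion that underlies the comparison between $P(t_0)$ and the induced pressure; without them one cannot identify the ``atomic'' Poincar\'e sum with a genuine dynamical invariant of $f$.
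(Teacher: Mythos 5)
Your first step is fine and is exactly the standard fact the paper records without proof: an atom of mass $a$ at $0$ forces $\cP(0,\HDhyp(f))\le 1/a<\infty$ (the digression on pure atomicity is not needed for this). The problem is the second half, which you correctly flag as the ``main obstacle'': that is where the entire content of the theorem lies, and the mechanism you propose does not close the gap. For an \emph{infinite}-branched induced Markov system, the vanishing of the topological pressure at $t_0$ does \emph{not} force the partition function (hence $\cP(0,t_0)$) to diverge at $t_0$: convergence of the series at the critical exponent of the pressure is precisely the degenerate phenomenon that produces atomic conformal measures in the first place, so invoking Bowen's formula here is circular --- you would be assuming the dichotomy you are trying to prove. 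Moreover, even setting up your induced system is problematic: the \emph{a priori} bounds control only the \emph{critical} returns, and constructing a nice set $V$ (with $f^n(\partial V)\cap V=\emptyset$) together with uniform moduli and summable diameters for \emph{all} return branches requires the backward contracting property, which is not known to follow from \emph{a priori} bounds alone.

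The paper's route is a bootstrap that you have skipped. From the contradiction hypothesis one gets $\cP(0,t)<\infty$ for every $t>h=\HDhyp(f)$; Theorem~\ref{thm:reduced} (whose proof occupies most of the paper) converts this into $\sum_n|Df^n(c)|^{-t/d}<\infty$, hence $|Df^n(c)|\to\infty$; combined with the \emph{a priori} bounds this yields backward contraction via part~1 of Theorem~\ref{thm:lb2bc} and Lemma~\ref{l:backward contraction}; and only for backward contracting maps does one have the divergence $\cP(0,h)=\infty$, which is \cite[Proposition~7.3]{RivShe1004} --- itself a nontrivial result proved using super-polynomial decay of pull-backs, not a pressure computation. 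So to repair your plan you would either have to reprove that divergence statement for your induced system (essentially redoing \cite{RivShe1004}), or supply the missing bridge from ``atom at $0$'' to ``backward contracting,'' which is Theorem~\ref{thm:reduced} together with Theorem~\ref{thm:lb2bc}.
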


We remark that in this result, it is essential that we consider the conformal measure of minimal exponent, as opposed to a conformal measure of higher exponent.
In fact, every unicritical map~$f$ that is not uniformly hyperbolic has, for each~$t > \HDhyp(f)$, a conformal measure of exponent~$t$, see~\cite{GraSmi09} for the case~$f$ satisfies the Collet-Eckmann condition, and~\cite{PrzRivSmi03,PrzRivSmi04} for the case~$f$ does not.
In many cases, even if~$f$ has \emph{a priori} bounds, for each~$t > \HDhyp(f)$ there is conformal measure of exponent~$t$ that is supported on the backward orbit of the critical point.

The existence of an atom at a point~$w$ for a conformal measure of~$f$ of exponent~$t$, is closely related to convergence of the following series:
$$ \cP(w, t)
\=
\sum_{n=0}^\infty\sum_{z\in f^{-n}(w)} |Df^n(z)|^{-t}; $$
it is the \emph{Poincar{\'e} series at~$w$ of exponent~$t$}.
In fact:
\begin{itemize}
\item If a conformal measure of exponent~$t$ has an atom at~$w$, then $\cP(w, t)<\infty$;
\item A conformal measure of exponent~$t$ has an atom at the critical point~$c_0$ if and only if $\cP(c_0, t)<\infty$.
\end{itemize}

\begin{theoalph}
\label{thm:reduced}
Let~$f$ be a unicritical polynomial having \emph{a priori} bounds.
Assume~$f$ is normalized so its critical point is~$0$.
Then for each $t > \HDhyp(f)$, the series~$\cP(0, t)$ is finite, if and only if~$\sum_{n=0}^\infty |Df^n(f(0))|^{-t/d}$ is finite.
\end{theoalph}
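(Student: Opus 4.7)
The plan is to prove the two implications separately, both hinging on a single local estimate that relates the derivative $|Df^n(z)|$ at a preimage $z$ of~$0$ close to the critical point to the derivative $|Df^n(v)|$ along the forward critical orbit, where $v := f(0) = c$. The \emph{a priori} bounds supply uniform Koebe distortion on pullbacks of a nested family of disks around~$0$, which is the fundamental technical tool throughout.

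For the local estimate, take $n$ large and $z \in f^{-n}(0)$ sufficiently close to~$0$, so $w := f(z) \in f^{-(n-1)}(0)$ is close to~$v$ and satisfies $f^{n-1}(w) = 0$. The \emph{a priori} bounds supply a Koebe neighborhood of~$v$ on which the branch of $f^{n-1}$ containing~$w$ is univalent with bounded distortion; combined with $f^{n-1}(v) = f^n(0)$, this gives $|w - v| \asymp |f^n(0)|/|Df^{n-1}(v)|$ and $|Df^{n-1}(w)| \asymp |Df^{n-1}(v)|$. Using the normal form $f(\zeta) = \zeta^d + c$, one has $|z|^d = |w - v|$ and $|Df(z)| = d|z|^{d-1}$, so $|Df^n(z)| \asymp |f^n(0)|^{(d-1)/d} |Df^{n-1}(v)|^{1/d}$. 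Since the chain rule at step~$n$ gives $|Df^n(v)| = d|f^n(0)|^{d-1}|Df^{n-1}(v)|$, one obtains
\begin{equation*}
|Df^n(z)|^{-t} \asymp |Df^n(v)|^{-t/d},
\end{equation*}
with constants independent of~$n$. The implication $(\Rightarrow)$ is then immediate: pick, for each large~$n$, the preimage $z_n \in f^{-n}(0)$ closest to the critical point; then $\cP(0, t) \ge \sum_n |Df^n(z_n)|^{-t} \gtrsim \sum_n |Df^n(v)|^{-t/d}$, so finiteness of $\cP(0,t)$ forces finiteness of the critical sum.

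The implication $(\Leftarrow)$ is the main obstacle, requiring control over \emph{all} preimages at each depth. Fix a nice disk $V$ around~$0$ from the \emph{a priori} bounds, and classify each $z \in f^{-n}(0)$ by the largest $k \in \{0,\ldots,n\}$ with $f^{n-k}(z) \in V$ (well-defined since $f^n(z) = 0 \in V$). For the stratum $k = 0$, the orbit of~$z$ avoids $V$ until the final landing, so the pullback of~$V$ along that orbit avoids the critical point; Koebe distortion and the uniform expansion off~$V$, combined with $t > \HDhyp(f)$, bound this contribution by a standard Poincar{\'e}-series argument (as in the treatment of escaping orbits in~\cite{PrzRiv07,PrzRivSmi03}). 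For $k \ge 1$, the orbit's final passage through the critical region is controlled by the preliminary local estimate, contributing a factor $\asymp |Df^k(v)|^{-t/d}$, while the preceding initial segment is again controlled by the off-critical Koebe estimate. Summing the strata and invoking $\sum_k |Df^k(v)|^{-t/d} < \infty$ concludes $\cP(0, t) < \infty$.

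The heart of the technical difficulty in~$(\Leftarrow)$ is the bookkeeping: verifying that the strata are disjoint, that Koebe distortion applies with constants uniform in~$n$ and~$k$, and that the contribution of each stratum is bounded by the desired factor $|Df^k(v)|^{-t/d}$ times a summable ``outer'' series. This uniformity is exactly what the uniform modulus~$\tau$ in Definition~\ref{def:apriorib} provides across all depths of the \emph{a priori} nest.
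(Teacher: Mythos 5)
Both directions of your proposal contain genuine gaps, and in both cases the missing piece is the actual technical content of the paper's proof.

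For~$(\Rightarrow)$: your local estimate $|Df^n(z)|^d\asymp|Df^n(c)|$ is essentially the paper's Lemma~\ref{lem:koebe}, but it presupposes a point $z\in f^{-n}(0)$ so close to~$0$ that $w=f(z)$ lies in a univalent pull-back neighborhood of~$c$ whose image is a small disk around~$0=f^{n-1}(w)$. Since that neighborhood must contain~$c$ itself (you use $|Df^{n-1}(w)|\asymp|Df^{n-1}(c)|$ and $|w-c|\asymp|f^n(0)|/|Df^{n-1}(c)|$), its image must contain $f^n(0)$; equivalently, such a $z$ exists only when $n$ is a return time of the critical orbit to a small neighborhood of~$0$. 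For all other~$n$ the closest point of $f^{-n}(0)$ to the critical point is not in the critical region and your inequality $|Df^n(z_n)|^{-t}\gtrsim|Df^n(c)|^{-t/d}$ is unsupported. So your argument only yields summability of $|Df^n(c)|^{-t/d}$ over return times~$n$. Upgrading this to the sum over \emph{all}~$n$ is not immediate: the paper does it by bounding the integral of $R(\delta)^{-t/d}$ over each close-return window of scales by a single Poincar\'e term (Lemma~\ref{l:close return integral}) and then invoking the equivalence between $\int_{0+}R(\delta)^{-t/d}\,\frac{d\delta}{\delta}<\infty$ and $\sum_n|Df^n(c)|^{-t/d}<\infty$ from~\cite{LiSheIMRN} (part~2 of Theorem~\ref{thm:lb2bc}). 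Some such interpolation between return times is unavoidable.

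For~$(\Leftarrow)$ the gap is more serious. Stratifying $f^{-n}(0)$ by the first entry time to a nice disk~$V$ does isolate a univalent initial segment, whose total contribution is controlled by a conformal-measure argument as you suggest (this is the paper's Lemma~\ref{l:conformal Poincare}; note it also needs the super-polynomial decay of pull-back diameters from~\cite{RivShe1004}, not merely expansion off~$V$). But your assertion that the final segment of length~$k$ ``contributes a factor $\asymp|Df^k(c)|^{-t/d}$'' treats $V\cap f^{-k}(0)$ as if it were a single preimage. In general it contains many points, and for most of them the branch of $f^{-k}$ from~$0$ re-enters critical pull-backs repeatedly, so their derivatives are in no way comparable to $|Df^k(c)|^{1/d}$; your local estimate applies only to the shallowest branch. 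Bounding $\sum_k\sum_{z'\in V\cap f^{-k}(0)}|Df^k(z')|^{-t}$ is the heart of the matter, and the paper handles it by coding each branch by its sequence of critical hits through a nest $(V_q)$ (the sets $\cS(q)$ and $Z_{\bfn}(q)$), bounding the total contribution of these coded pieces by $C_{\#}R(\tau^q\delta_0)^{-t/d}$ via the children estimate (Lemma~\ref{l:child contribution} and Proposition~\ref{prop:poinbad}), and closing a recursion in which the post-critical remainder is again an arbitrary iterated preimage of~$0$, with each level of the recursion contracting by a factor $\tfrac12$. Without some version of this recursive bookkeeping your stratification does not close, because the ``final segment'' is not a terminal object but contains further copies of the whole problem.
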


\subsection{Strategy and organization}
\label{ss:organization}
We derive Theorem~\ref{thm:main} from Theorem~\ref{thm:reduced} arguing by contradiction: If the conformal measure of minimal exponent had an atom at the critical point, by Theorem~\ref{thm:reduced} the derivatives at the critical value would grow to infinity.
Together with the \emph{a priori} bounds hypothesis, this implies that the map is ``backward contracting'' (Theorem~\ref{thm:lb2bc}), so the results of~\cite{RivShe1004} apply to this map; in particular, the Poincar{\'e} series at the critical point diverges.
This contradicts the existence of an atom at the critical point.
In~\S\ref{s:preliminaries} we fix some notation and terminology and recall the definition of backward contracting maps.
We deduce Theorem~\ref{thm:main} from Theorem~\ref{thm:reduced} in~\S\ref{s:a priori bounds and bc}, after proving Theorem~\ref{thm:lb2bc}.

To prove Theorem~\ref{thm:reduced}, we show that in either case the map is backward contracting (part~$1$ of Theorem~\ref{thm:lb2bc}).
This allows us to use the characterization for backward contracting maps of the summability condition given in~\cite{LiSheIMRN}.
We recall this result in~\S\ref{s:a priori bounds and bc}, as part~$2$ of Theorem~\ref{thm:lb2bc}.
To prove the direct implication in Theorem~\ref{thm:reduced}, we divide the integral of the backward contraction function, characterizing the summability condition, into intervals bounded by consecutive close return scales.
Then we show that each of these integrals is bounded by one of the terms in the Poincar{\'e} series up to a multiplicative constant (Lemma~\ref{l:close return integral}).
This is done in~\S\ref{s:direct implication of reduced}.
The proof of the reverse implication in Theorem~\ref{thm:reduced} occupies~\S\ref{s:reverse implication of reduced} and is more involved.
We use a discretized sequence of scales to code each iterated preimage of the critical point.
To do this, we consider the largest scale whose pull-back is conformal, and consider the ``critical hits'' when pulling back the previous scale, as a code.
One of the crucial estimates is the contribution in the Poincar{\'e} series of those iterated preimages of the critical point for which a certain ball can be pulled back conformally (Lemma~\ref{l:conformal Poincare}).
This estimate is done using one of the main results of~\cite{RivShe1004}: For a backward contracting map the diameter of pull-backs decreases at a super-polynomial rate.

\section{Preliminaries}
\label{s:preliminaries}
We endow~$\C$ with its norm distance, and for a bounded subset~$W$ of~$\C$ we denote by~$\diam(W)$ the diameter of~$W$.
Moreover, for~$\delta > 0$ and for a point~$z$ of~$\C$, we denote by~$B(z, \delta)$ the ball of~$\C$ centered at~$z$ and of radius~$\delta$.

A \emph{topological disk} is an open subset of~$\C$ homeomorphic to the unit disk, and that is not equal to~$\C$.
We endow such a set with its hyperbolic metric.
If~$V$ and~$V'$ are topological disks such that~$\overline{V'} \subset V$, we denote by~$\mmod(V; V')$ the supremum of the moduli of all annuli contained in~$V \setminus V'$ that enclose~$V'$.
See for example~\cite{Mil06,CarGam93} for background.

Throughout the rest of this paper we fix an integer~$d \ge 2$, a parameter~$c$ in~$\C$, and put~$f(z) \= z^d + c$.
Moreover, we assume~$0$ is non-periodic and recurrent by~$f$; this implies that~$0$ is in the Julia set of~$J(f)$.
Given a subset~$V$ of~$\C$, and an integer~$n \ge 0$, each connected component of~$f^{-n}(V)$ is called a \emph{a pull-back of~$V$ by~$f^n$}.
When~$n \ge 1$, a pull-back of~$V$ is \emph{critical} if it contains~$0$.

\subsection{Backward contraction}
\label{ss:backward contraction}
In this section we recall the definition of ``backward contraction'' from~\cite{Riv07}, and compare it with a variant from~\cite{LiSheIMRN}.

For each~$\delta > 0$, put $\tB(\delta) \= B(0, \delta^{1/d})$, and
$$ r(\delta)
\=
\sup \left\{ r > 0 :
\begin{array}{ll}
\text{ for every pull-back~$U$ of~$\tB(\delta r)$,}
\\
\text{ $\dist(W, c) \le \delta$ implies~$\diam(W) < \delta$}
\end{array}
\right\}. $$
Given~$r_0 > 1$, the map~$f$ is \emph{backward contracting with constant~$r_0$}, if for every sufficiently small~$\delta$ we have~$r(\delta) > r_0$.
Moreover, $f$ is \emph{backward contracting} if $r(\delta) \to \infty$ as~$\delta \to 0$.

For each~$\delta > 0$, put
$$ R(\delta)
\=
\inf\left\{
\frac{\delta}{\diam(U)}:
U \text{ pull-back of $\tB(\delta)$ containing~$c$} \right\}.$$
Clearly, for every~$\delta > 0$ and~$\delta' > \delta$ we have~$R(\delta) \ge (\delta / \delta') R(\delta')$.
Moreover, for every~$\delta > 0$ and every~$r$ in~$(0, r(\delta))$, we have~$R(\delta r) \ge r$.
So, if~$f$ is backward contracting with some constant~$r_0 > 1$, then for every sufficiently small~$\delta$ we have~$R(\delta) \ge r_0$.
In particular, if~$f$ is backward contracting, then~$R(\delta) \to \infty$ as~$\delta \to 0$.

\begin{lem}
\label{l:backward contraction}
There is a constant~$\rho_0 > 1$ independent of~$f$ and~$d$, such that for every~$\delta > 0$ satisfying~$R(\delta) \ge \rho_0^d$, and every~$\delta'$ in~$[\delta / R(\delta), \delta)$, we have~$r(\delta') \ge \rho_0^{-d} \delta / \delta'$.
In particular, if there is~$R_0 \ge \rho_0^d$ such that for every sufficiently small~$\delta > 0$ we have~$R(\delta) > R_0$, then~$f$ is backward contracting with constant~$\rho_0^{-d} R_0$.
Moreover, $f$ is backward contracting if and only if~$R(\delta) \to \infty$ as~$\delta \to \infty$.
\end{lem}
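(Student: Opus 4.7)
The plan is to prove the first assertion of the lemma; the remaining two then follow by short arguments combined with the observation preceding the lemma (that $r(\delta) > r$ implies $R(\delta r) \ge r$). The universal constant $\rho_0 > 1$ will be fixed at the end, as a function of the Koebe distortion constants involved.

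Fix $\delta > 0$ with $R(\delta) \ge \rho_0^d$ and $\delta' \in [\delta/R(\delta), \delta)$. Set $r_0 := \rho_0^{-d} \delta/\delta'$. Since $\delta' r_0 = \rho_0^{-d} \delta \le \delta/R(\delta) \le \delta'$, we have $\tB(\delta' r_0) \subset \tB(\delta)$. The task is to show $r(\delta') \ge r_0$: that is, for every $r < r_0$ and every pull-back $U$ of $\tB(\delta' r)$ by $f^n$ with $\dist(U, c) \le \delta'$, we have $\diam(U) < \delta'$. The key construction is to enlarge $U$ to the pull-back $\tilde{U}$ of $\tB(\delta)$ by $f^n$ containing it. If $c \in \tilde{U}$, the bound is immediate: $\diam(\tilde{U}) \le \delta/R(\delta) \le \delta'$ by the very definition of $R$, so $\diam(U) \le \delta'$.

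The main obstacle is the complementary case $c \notin \tilde{U}$. Here I would examine the pull-back chain $\tB(\delta) = \tilde{U}_0, \tilde{U}_1, \ldots, \tilde{U}_n = \tilde{U}$ and identify the \emph{critical levels} $L$ with $0 \in \tilde{U}_L$. At the immediately preceding level, $\tilde{U}_{L-1} \ni c$, so $\diam(\tilde{U}_{L-1}) \le \rho_0^{-d}\delta$ by $R(\delta) \ge \rho_0^d$; hence $\tilde{U}_L \subset B(0, (\rho_0^{-d}\delta)^{1/d})$ and $\diam(\tilde{U}_L) \le 2\rho_0^{-1} \delta^{1/d}$ by the $d$-th root behaviour of $f$ at $0$. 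Between consecutive critical levels the map is conformal, so classical Koebe distortion controls the evolution of the ratio $\diam(U_i)/\diam(\tilde{U}_i)$. A telescoping argument combining these two ingredients, with $\rho_0$ chosen large enough to absorb the universal Koebe constants, then yields $\diam(U) < \delta'$.

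The second assertion follows by applying the first with $\delta' = \delta/R_0$ for $R_0 \ge \rho_0^d$: the hypothesis $R(\delta) > R_0$ gives $\delta' \ge \delta/R(\delta)$, and the first assertion then yields $r(\delta') \ge \rho_0^{-d} R_0$ uniformly, which is backward contraction with constant $\rho_0^{-d} R_0$. For the third assertion, one direction uses the observation from the text ($r(\delta) \to \infty$ forces $R(\delta) \to \infty$ as $\delta \to 0$), while the other direction follows from the first assertion by choosing, for each small $\delta'$, a scale $\delta$ with $\delta/\delta' \to \infty$ and $R(\delta) \ge \delta/\delta'$. The hardest step throughout is controlling the telescoped distortion in the case $c \notin \tilde{U}$: each critical level compresses the relative diameter $\diam(U_i)/\diam(\tilde{U}_i)$ only by a $1/d$-exponent, so the absolute bound $\diam(\tilde{U}_L) \le 2\rho_0^{-1}\delta^{1/d}$ must carry most of the weight.
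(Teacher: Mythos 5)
Your frame (reduce to showing that pull-backs of $\tB(\rho_0^{-d}\delta)$ within distance $\delta'$ of $c$ have diameter $<\delta'$, and split according to whether the enclosing pull-back $\widetilde{U}$ of $\tB(\delta)$ contains $c$) is the paper's, and the case $c\in\widetilde{U}$ is handled correctly. (Minor slip: $R(\delta)\ge\rho_0^d$ gives $\delta/R(\delta)\le\rho_0^{-d}\delta$, not the reverse as you wrote; only the trivial containment $\tB(\rho_0^{-d}\delta)\subset\tB(\delta)$ is actually needed there.) The genuine gap is the main case $c\notin\widetilde{U}$, where your telescoping over the critical levels of the fixed chain $\widetilde{U}_0,\dots,\widetilde{U}_n$ does not close, for two reasons. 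First, as you yourself observe, each critical passage weakens the control of $\diam(U_i)/\diam(\widetilde{U}_i)$ (equivalently of $\mmod(\widetilde{U}_i;U_i)$) by a $1/d$ in the exponent; after $k$ critical levels the surviving modulus is only of order $\log\rho_0/d^{k}$, and $k$ is unbounded, so no fixed $\rho_0$ can absorb the resulting Koebe constants. Second, the target is $\diam(U)<\delta'$, and $\delta'$ may be as small as $\delta/R(\delta)$, far below $\rho_0^{-1}\delta^{1/d}$; hence the absolute bound $\diam(\widetilde{U}_L)\le 2\rho_0^{-1}\delta^{1/d}$ cannot ``carry most of the weight''. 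In this case the hypothesis $\dist(U,c)\le\delta'$ must enter the final estimate, and in your sketch it never does.

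The paper's argument avoids both problems by an induction on $n$ with a \emph{restart} at critical passages. You in fact derived the key containment $\widetilde{U}_L\subset B(0,(\rho_0^{-d}\delta)^{1/d})=\tB(\rho_0^{-d}\delta)$, but used it only for a diameter bound; its real content is structural: it shows that $U$ is contained in a pull-back of $\tB(\rho_0^{-d}\delta)$ by the \emph{shorter} iterate $f^{\,n-L}$, to which the induction hypothesis applies. Unwinding the induction, at every stage one has a fresh pair (pull-back of $\tB(\rho_0^{-d}\delta)$ inside pull-back of $\tB(\delta)$) and three cases: the outer pull-back contains $c$ (conclude via $R(\delta)\ge\rho_0^d$); the outer pull-back is mapped conformally, so the \emph{full} modulus $\log\rho_0$ is available and, choosing $\rho_0$ so that modulus $\ge\log\rho_0$ forces $\diam(U)<\dist(U,\partial\widetilde{U})$, one gets $\diam(U)<\dist(U,\partial\widetilde{U})\le\dist(U,c)\le\delta'$ --- this is where $\dist(U,c)\le\delta'$ is used; or there is a critical passage and one restarts with the re-inflated inner set at a strictly smaller iterate. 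The re-inflation is exactly what prevents the modulus from degrading with the number of critical passages. Your reductions of the second and third assertions to the first are fine.
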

\begin{proof}
Let~$\rho_0 > 1$ be sufficiently large so that for every pair of topological disks~$U$ and~$V$ such that
$$ \overline{U} \subset V
\text{ and }
\mmod(V; U) \ge \log \rho_0, $$
we have~$\diam(U) < \dist(U, \partial V)$.

Fix~$\delta > 0$ such that~$R(\delta) \ge \rho_0^d$, and~$\delta'$ in~$[\delta / R(\delta), \delta)$.
To prove the lemma it suffices to show that for every integer~$n \ge 0$, and every pull-back~$W$ of~$\tB(\rho_0^{- d} \delta)$ by~$f^n$ satisfying~$\dist(W, c) \le \delta'$, we have~$\diam(W) < \delta'$.
We proceed by induction in~$n$.
When~$n = 0$, we have~$W = \tB(\rho_0^{-d} \delta)$.
If~$c$ is in~$\tB(\delta)$, then by the definition of~$R(\delta)$ we have
$$ \diam(W)
<
\diam(\tB(\delta))
\le
\delta / R(\delta)
\le
\delta'. $$
Otherwise, by the definition of~$\rho_0$ we have
$$ \diam(W)
<
\dist(W, \partial \tB(\delta))
\le
\dist(W, c)
\le
\delta'. $$
This proves the desired assertion when~$n = 0$.
Let~$n \ge 1$ be an integer such that the desired assertion holds with~$n$ replaced by each element of~$\{0, \ldots, n - 1 \}$.
Let~$W$ be a pull-back of~$\tB(\rho_0^{- d} \delta)$ by~$f^n$ satisfying~$\dist(W, c) \le \delta'$.
If the pull-back~$\hW$ of~$\tB(\delta)$ containing~$W$ contains~$c$, then
$$\mmod(\hW;W)=\mmod(\tB(\delta);\tB(\rho_0^{-d} \delta))=\log \rho_0$$
so by the definition of~$R(\delta)$ we have
$$ \diam(W)
<
\diam(\hW)
\le
\delta / R(\delta)
\le
\delta'. $$
So we suppose~$\hW$ does not contain~$c$.
If~$f^n$ is conformal on~$\hW$, then by the definition of~$\rho_0$ we have
$$ \diam(W) < \dist(W, \partial \hW) \le \dist(W, c) \le \delta'. $$
It remains to consider the case where~$f^n$ is not conformal on~$\hW$, so there is~$m$ in~$\{0, \ldots, n - 1 \}$ such that~$f^m(\hW)$ contains~$0$.
Then~$f^{m + 1}(\hW)$ contains~$c$, and by definition of~$R(\delta)$ we have
$$ \diam(f^{m + 1}(\hW))
\le
\delta / R(\delta)
\le
\rho_0^{-d} \delta. $$
This implies that~$f^{m + 1}(\hW)$ is contained in~$B(c, \rho_0^{-d} \delta)$, and therefore that~$f^m(\hW)$ is contained in~$\tB(\rho_0^{-d} \delta)$.
Using the induction hypothesis, we conclude that~$\diam(W) < \delta'$.
This completes the proof of the induction step and of the lemma.
\end{proof}

\subsection{Nice sets and children}
\label{ss:nice sets and children}
The purpose of this section is to prove a general lemma about backward contracting maps that is used in~\S\ref{ss:thickened grandchildren} to prove Theorem~\ref{thm:reduced}.

A topological disk~$V$ containing~$0$ is a \emph{nice set for~$f$}
if for every integer~$n \ge 1$ the set~$f^n(\partial V)$ is disjoint from~$V$.

For an integer~$m \ge 1$, and a connected open set~$V$, a pull-back~$W$ of~$V$ by~$f^m$ is a \emph{child of~$V$}, if~$f^m$ maps~$W$ onto~$V$ as a $d$-to-$1$ map and~$W\ni 0$.

The following lemma is a more precise version of~\cite[Lemma~$3.15$]{RivShe1004}, with the same proof.

\begin{lem}
\label{l:child contribution}
Suppose~$f$ is backward contracting.
Then for every~$s > 0$ there is~$\delta_0 > 0$, such that for every~$\delta$ in~$(0, \delta_0]$ there is a nice set~$V$ for~$f$ satisfying~$\tB(\delta/2) \subset V \subset \tB(\delta)$, and such that
$$ \sum_{Y : \text{ child of } V} \diam(f(Y))^s
\le
(1 - 2^{-s})^{-1} \left( R(\delta)^{-1} \delta \right)^s. $$
\end{lem}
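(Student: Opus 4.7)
The plan follows \cite[Lemma~3.15]{RivShe1004}, making the right-hand-side bound explicit. Given $s > 0$ and a sufficiently small $\delta$, we construct a nice set $V$ with $\tB(\delta/2) \subset V \subset \tB(\delta)$, enumerate its children as a nested decreasing sequence $V_1 \supsetneq V_2 \supsetneq \cdots$, and show that their $f$-image diameters decay geometrically.

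For the nice set, backward contraction (in particular $R(\delta) \to \infty$ as $\delta \to 0$) lets us thicken $\tB(\delta/2)$ along appropriate boundary preimages that avoid the forward orbit of~$0$; for $\delta \le \delta_0(s)$ the thickening stays inside $\tB(\delta)$. Enumerating children by increasing depths $m_1 < m_2 < \cdots$, the sequence is automatically nested: niceness of $V$ forces $f^{m_k}(V_{k+1}) \subset V$, hence $V_{k+1} \subset V_k$. Because $f^{m_k} : V_k \to V$ is $d$-to-$1$ branched only at~$0$, the factorization $f^{m_k} = f^{m_k - 1} \circ f$ shows that $f^{m_k - 1} : f(V_k) \to V$ is conformal; therefore the images $f(V_1) \supsetneq f(V_2) \supsetneq \cdots$ are likewise nested, each contains $c$, and each is a critical pull-back of $V \subset \tB(\delta)$. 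The definition of~$R(\delta)$ then yields the base estimate
$$ \diam(f(V_k)) \le R(\delta)^{-1} \delta. $$

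The technical heart of the argument is the geometric decay $\diam(f(V_{k+1})) \le \tfrac{1}{2} \diam(f(V_k))$ for each $k \ge 1$. Conformality of $f^{m_k-1}$ transports the annulus $f(V_k) \setminus \overline{f(V_{k+1})}$ isomorphically onto $V \setminus \overline{U_k}$, where $U_k := f^{m_k}(V_{k+1}) \subset V$ is a topological disk containing $f^{m_k}(0) \ne 0$. Choosing $V$ carefully using backward contraction, one arranges that $\mmod(V \setminus \overline{U_k}) \ge M_0$ for a universal constant $M_0$ providing the factor $\tfrac{1}{2}$ contraction of diameters via Gr\"otzsch's inequality; this constraint on the modulus is what determines the threshold $\delta_0(s)$.

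Iterating then gives $\diam(f(V_k)) \le 2^{-(k-1)} R(\delta)^{-1}\delta$, and summing the geometric series yields
$$ \sum_{Y \text{ child of } V} \diam(f(Y))^s \le \sum_{k=1}^\infty \bigl( 2^{-(k-1)} R(\delta)^{-1}\delta \bigr)^s = (1-2^{-s})^{-1} \bigl( R(\delta)^{-1}\delta \bigr)^s, $$
as desired. The main obstacle is securing the uniform lower bound $\mmod(V \setminus \overline{U_k}) \ge M_0$ simultaneously for all $k$; this is the delicate step of the argument and is precisely where backward contraction together with the smallness of~$\delta_0$ is invoked to calibrate the construction of~$V$.
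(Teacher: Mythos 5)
Your proof is correct and follows essentially the same route as the paper's: the uniform modulus bound $\mmod(V; U_k) \ge M_0$ that you flag as the delicate step is exactly the $\lambda$-niceness of $V$, which the paper does not construct by hand either but imports from~\cite[Lemma~3.13]{RivShe1004}. The only cosmetic difference is that you transport the annulus conformally by $f^{m_k-1}$ and halve $\diam(f(V_k))$ directly, whereas the paper halves $\diam(Y_k)$ via $\mmod(Y_k; Y_{k+1}) \ge \lambda/d$ and then passes to the images; both yield the same geometric series.
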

\begin{proof}
Let~$\lambda > 0$ be sufficiently large so that for every pair of topological discs~$Y$ and~$Y'$ satisfying
$$ \overline{Y'} \subset Y
\text{ and }
\mmod(Y; Y') \ge \lambda / d, $$
we have~$\diam(Y') \le \diam(Y) / 2$, see for example~\cite[Theorem~$2.1$]{McM94}.
In view of~\cite[Lemma~$3.13$]{RivShe1004}, for every sufficiently small~$\delta > 0$ there is a $\lambda$\nobreakdash-nice set~$V$ for~$f$ such that~$\tB(\delta/2) \subset V \subset \tB(\delta)$.
We prove the desired assertion hold for this choice of~$V$.

For each integer~$k \ge 1$, let~$Y_k$ be the $k$-th smallest child of~$V$ and let~$s_k$ be the integer such that~$f^{s_k}(Y_k) = V$.
By the backward contracting property, we have $\diam(f(Y_1)) \le R(\delta)^{-1}\delta$.
Note that for each~$k \ge 1$ the set~$f^{s_k}(Y_{k+1})$ is contained in a return domain of~$V$, so~$\mmod(V; f^{s_k}(Y_{k+1})) \ge \lambda$.
By the definition of child, the map~$f^{s_k} : Y_k \to V$ is $d$-to-$1$, thus~$\mmod(Y_k; Y_{k+1}) \ge \lambda/d$ and therefore~$\diam(Y_{k + 1}) \le \diam(Y_k)/2$.
The conclusion of the lemma follows.
\end{proof}

\section{\emph{A priori} bounds and backward contraction}
\label{s:a priori bounds and bc}
In this section we derive Theorem~\ref{thm:main} from Theorem~\ref{thm:reduced}.
To do so, we first establish a sufficient criterion for a unicritical map having \emph{a priori} bounds to be backward contracting (Theorem~\ref{thm:lb2bc} below).

In the following theorem we summarize and complement results in~\cite{LiShe10b,LiSheIMRN}, when restricted to unicritical maps.
We state it in a stronger form than what is needed for this section.
\begin{thm}
\label{thm:lb2bc}
For each~$\tau > 0$ there is a constant~$\eta > 1$, such that if~$f$ has \emph{a priori} bounds with constant~$\tau$, then the following properties hold.
\begin{enumerate}
\item[1.]
Let~$R_0 > 1$ be such that for every sufficiently large~$n$ we have either
$$|Df^n(c)| \ge \eta R_0,
\text{ or }
\min_{\zeta\in f^{-n}(0)}|Df^n(\zeta)| \ge (\eta R_0)^{1/d}. $$
Then for every sufficiently small~$\delta > 0$ we have~$R(\delta) \ge R_0$.
\item[2.]
For each~$t > 0$, the sum~$\sum_{n = 0}^{\infty} |Df^n(c)|^{- t}$ is finite if and only if~$\int_{0+}^{\infty} R(\delta)^{-t} \ \frac{d\delta}{\delta}$ is finite.
\end{enumerate}
\end{thm}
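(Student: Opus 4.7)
Both parts of the theorem are stated as a consolidation of results from \cite{LiShe10b,LiSheIMRN} specialized to the unicritical case, and my proposal follows that route. For part~1, the plan is to estimate $R(\delta)$ directly from the derivative hypothesis. Fix $\delta > 0$ small, and let $W$ be a pull-back of $\tB(\delta)$ by $f^n$ containing $c$, so that necessarily $|f^n(c)| \le \delta^{1/d}$. Using \emph{a priori} bounds, embed $\tB(\delta)$ in a nice topological disk $V$ drawn from the principal nest at comparable scale, with $\mmod(V; \tB(\delta))$ bounded below by a definite multiple of $\tau$. Pulling $V$ back along the same orbit produces $\hW \supset W$, and a Koebe-type argument based on $\mmod(\hW; W) \ge \mmod(V; \tB(\delta))$ gives $\diam(W) \le C_0 \diam(\hW)$ with $C_0 = C_0(\tau)$; it remains to bound $\diam(\hW)$ by $C_1 \delta / R_0$ for a suitable $C_1 = C_1(\tau)$.

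The two alternatives in the hypothesis cover complementary cases for this last estimate. When the combinatorial depth of $f^n : \hW \to V$ is bounded by a constant depending only on $\tau$, a standard distortion argument at $c$, combined with the chain-rule identity $|Df^n(c)|\,|f^n(c)|^{d-1} \asymp |Df^{n+1}(c)|$, the relation $|f^n(c)|^{d-1} \asymp \delta^{(d-1)/d}$, and the hypothesis $|Df^{n+1}(c)| \ge \eta R_0$, yields the required bound directly. When the combinatorial depth is not bounded, factor $f^n$ at the most recent critical visit along the pull-back; the second alternative $|Df^j(\zeta)| \ge (\eta R_0)^{1/d}$ applied at the relevant preimage $\zeta$ of $0$ absorbs the $d$-to-$1$ ramification at~$0$, and iterating this inductively across all critical visits, using the definite modulus gain at each critical return supplied by the \emph{a priori} bounds, delivers the same bound. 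Choosing $\eta$ large enough to swallow $C_0 C_1$ and all accumulated distortion constants gives $R(\delta) \ge R_0$.

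For part~2, the plan is a dyadic comparison based on close-return scales. Let $\delta_k \searrow 0$ be the scales at which $|f^{n_k}(c)|$ achieves a new minimum, with $\delta_k \asymp |f^{n_k}(c)|^d$. On each interval $(\delta_{k+1}, \delta_k]$, the estimates underlying part~1 together with Koebe distortion on the principal nest show that $R(\delta)$ is comparable to $|Df^{n_k+1}(c)|$, whence
\[
\int_{0+}^{\infty} R(\delta)^{-t}\, \frac{d\delta}{\delta}
\asymp
\sum_{k} |Df^{n_k+1}(c)|^{-t} \log(\delta_k / \delta_{k+1}).
\]
Between consecutive close returns $n_k$ and $n_{k+1}$, the derivative $|Df^n(c)|$ grows at a geometric rate comparable to $\delta_k/\delta_{k+1}$, allowing the block sum $\sum_{n_k < n \le n_{k+1}} |Df^n(c)|^{-t}$ to be sandwiched between uniform constant multiples of $|Df^{n_k+1}(c)|^{-t} \log(\delta_k/\delta_{k+1})$. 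Summing over $k$ closes the equivalence.

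The main obstacle I anticipate lies in part~1, in the case where the pull-back $\hW$ visits the critical point many times. Controlling the cumulative distortion across multiple critical visits, and identifying at each step the correct preimage of $0$ to which the second alternative must be applied, requires the full strength of the \emph{a priori} bounds, so that each critical return contributes a definite modulus gain. The exponent $1/d$ in $(\eta R_0)^{1/d}$ is indispensable because the derivative of $f^n$ can drop by a factor of order $d$ across a critical visit; fixing $\eta$ once and for all, large enough to dominate all such correction factors uniformly in $\delta$, is the technical heart of the argument.
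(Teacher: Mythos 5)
There are genuine gaps in both parts. In part~1, your case division by ``combinatorial depth'' does not match the logical structure of the hypothesis: the two alternatives are per-$n$ alternatives (for each return time $n$ one or the other holds), not a dichotomy between shallow and deep pull-backs. The paper's mechanism is different: a Koebe-type lemma (Lemma~\ref{lem:koebe}) shows that for a single $d$-to-$1$ critical pull-back $W'$ at time $s$ one has $\diam(f(W')) \le C \max\{|Df^s(c)|, |Df^s(\zeta)|^d\}^{-1}\diam(f(V'))$, so that \emph{either} alternative yields the same diameter bound for the \emph{same} pull-back --- this is where the exponent $1/d$ comes from (the relation $|Df^s(\zeta)|^d \lesssim |Df^s(c)|$ via $|Df(w)| = d|w|^{d-1}$ near $0$), not from ``the derivative dropping by a factor of order $d$ across a critical visit.'' The step you defer as ``the technical heart'' --- iterating across many critical visits with accumulating distortion --- is precisely where your argument would fail: the constants would grow with the number of critical visits. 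The paper avoids this entirely by an induction over dyadic scales $\delta_n = 2^{-n}\delta_0$: once $R(\delta_{n-1}) \ge 2R_0 > 2$ is established, every critical pull-back of $\tB(\delta_{n-1})$ lies inside $\tB(\delta_n)$, so every critical pull-back of $\tB(\delta_n)$ is either contained in a deeper critical pull-back of $\tB(\delta_n)$ (handled by the first-landing reduction in Lemma~\ref{l:diameter derivative estimate}) or is a $d$-to-$1$ child of $\tB(\delta_{n-1})$, to which the Koebe lemma applies with a single, uniform constant. No distortion is ever accumulated over several critical visits.

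In part~2, two of your claims are unjustified and the first is false. $R(\delta)$ is \emph{not} comparable to $|Df^{n_k+1}(c)|$ on a close-return interval $(\delta_{k+1},\delta_k]$: as the paper's own Lemma~\ref{l:close return integral} shows, on such an interval $R(\delta)$ grows like $(\delta/\delta_{k+1})^{1-1/d}|Df^{n}(\zeta)|^{d}$, and it is this polynomial growth in $\delta/\delta_{k+1}$ that makes the integral over the block converge to a single Poincar{\'e}-series term. Likewise, there is no reason for $|Df^n(c)|$ to grow geometrically between consecutive close returns, and your proposed sandwiching of the block sum $\sum_{n_k < n \le n_{k+1}} |Df^n(c)|^{-t}$ has no support. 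The paper does not attempt a self-contained proof here: part~2 is obtained by combining part~1 with Lemma~\ref{l:backward contraction}, the equivalences of \cite[Theorems~1.3 and~1.4]{LiSheIMRN}, and \cite[Corollary~8.3]{Riv07}. If you want a direct argument you would need to reprove those external results, and the comparison you sketch is not the right one.
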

When~$|Df^n(c)|$ is eventually bounded from below by~$\eta R_0$, part~$1$ is given by (the proof of)~\cite[Theorem~A]{LiShe10b}.
Part~$2$ is a direct consequence of part~$1$ and~\cite[Theorems~$1.3$ and~$1.4$]{LiSheIMRN}, together with Lemma~\ref{l:backward contraction} and~\cite[Corollary~$8.3$]{Riv07}.

To prove this theorem, we shall use the following variation of the Koebe distortion theorem.

\begin{lem}
\label{lem:koebe}
For each~$\tau_0 > 0$ there is a constant~$C_{@} > 1$, such that for every pair of topological disks~$V$ and~$V'$ containing~$0$, and satisfying
$$ \overline{V'} \subset V
\text{ and }
\mmod(V;V') \ge \tau_0, $$
the following property holds.
Let~$s \ge 1$ be a integer such that~$f^s(0)$ is in~$V'$, and such that~$f^{s - 1}$ maps a neighborhood~$W$ of~$c$ conformally onto~$V$.
Moreover, let~$W'$ be the pull-back of~$V'$ by~$f^{s - 1}$ contained in~$W$, and let~$\zeta$ in~$f^{-s}(0)$ be such that~$f(\zeta)$ is in~$W'$.
Then we have
$$ \diam(W')
\le
C_{@} \max \left\{ |Df^s(c)|, |Df^s(\zeta)|^d \right\}^{-1} \diam (f(V')). $$
\end{lem}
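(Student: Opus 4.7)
The plan is to apply the Koebe distortion theorem to the conformal map $f^{s-1} : W \to V$. Since modulus is a conformal invariant, the hypothesis $\mmod(V;V') \ge \tau_0$ gives $\mmod(W;W') \ge \tau_0$, and standard Koebe estimates then produce a constant $K = K(\tau_0)$ such that, for every $p \in W'$,
\[ \diam(W') \le K \, |Df^{s-1}(p)|^{-1} \diam(V'). \]
I would apply this at two distinguished points: first at $p = c$, which lies in $W'$ because $f^{s-1}(c) = f^s(0)$ is in $V'$ by hypothesis; and second at $p = f(\zeta)$, which lies in $W'$ by hypothesis.

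Next I would use the chain rule together with $Df(z) = d z^{d-1}$ to convert $Df^{s-1}$ into $Df^s$. Writing $f^s = f \circ f^{s-1}$ and evaluating at $c$ yields $Df^s(c) = d (f^s(0))^{d-1} Df^{s-1}(c)$; writing $f^s = f^{s-1} \circ f$ and evaluating at $\zeta$ yields $Df^s(\zeta) = d \zeta^{d-1} Df^{s-1}(f(\zeta))$. Substituting these identities into the two Koebe bounds produces
\[ \diam(W') \le Kd \frac{|f^s(0)|^{d-1}}{|Df^s(c)|} \diam(V') \quad \text{and} \quad \diam(W') \le Kd \frac{|\zeta|^{d-1}}{|Df^s(\zeta)|} \diam(V'). \]

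The final step is to upgrade the factor $\diam(V')$ to $\diam(f(V'))$ using three elementary geometric facts that follow from the assumption $0 \in V'$ and the formula $f(z) = z^d + c$. First, $\diam(f(V')) \ge 2^{-d} \diam(V')^d$: pick $w \in V'$ with $|w| \ge \diam(V')/2$, which exists because $0 \in V'$, and note that $f(w)$ and $c = f(0)$ both lie in $f(V')$ with $|f(w) - c| = |w|^d$. Second, $|f^s(0)| \le \diam(V')$ since both $0$ and $f^s(0)$ lie in $V'$; combining this with the first Koebe bound and the first fact gives $\diam(W') \le C_{@} |Df^s(c)|^{-1} \diam(f(V'))$. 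Third, $|\zeta|^d = |\zeta^d| = |f(\zeta) - c| \le \diam(W')$ since $f(\zeta)$ and $c$ both lie in $W'$; inserting the resulting bound $|\zeta|^{d-1} \le \diam(W')^{(d-1)/d}$ into the second Koebe bound and solving for $\diam(W')$ yields $\diam(W') \le (Kd)^d \diam(V')^d / |Df^s(\zeta)|^d$, and one more application of the first fact gives $\diam(W') \le C_{@} |Df^s(\zeta)|^{-d} \diam(f(V'))$. Taking the worse of the two bounds proves the lemma with $C_{@}$ depending only on $\tau_0$ and $d$. The only mildly subtle point is the self-referential inequality $|\zeta|^d \le \diam(W')$, which closes up cleanly thanks to the identity $f(\zeta) - c = \zeta^d$.
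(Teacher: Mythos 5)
Your proof is correct and follows essentially the same route as the paper's: Koebe distortion of $f^{s-1}$ on $W'$ applied at the two points $c$ and $f(\zeta)$, the chain rule with the explicit formula $Df(z)=dz^{d-1}$, and the elementary inequality $\diam(V')^d\le 2^d\diam(f(V'))$. The only cosmetic difference is that the paper obtains the $\zeta$-bound from the $c$-bound via the intermediate comparison $|Df^s(\zeta)|^d\le d^{d-1}K^d|Df^s(c)|$, whereas you close the self-referential inequality coming from $|\zeta|^d=|f(\zeta)-c|\le\diam(W')$ directly; both variants rest on the same ingredients.
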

\begin{proof}
By Koebe distortion theorem there is a constant~$K_1 > 1$ that only depends on~$\tau_0$, such that the distortion of~$f^{s - 1}$ on~$W'$ is bounded by~$K_1$.
We thus have,
\begin{multline*}
\diam(W')
\le
K_1 |Df^{s - 1}(c)|^{-1} \diam(V')
\\ =
K_1 |Df^s(c)|^{-1} |Df(f^{s - 1}(c))| \diam(V').
\end{multline*}
Since~$|Df(f^{s - 1}(c))| = d|f^{s - 1}(c)|^{d - 1} \le d \diam(V')^{d - 1}$, we obtain
\begin{equation}
  \label{e:first half}
\diam(W')
\le
d K_1 |Df^s(c)|^{-1} \diam(V')^d
\le
d 2^d K_1 |Df^s(c)|^{-1} \diam(f(V')).
\end{equation}

On the other hand, we have~$|Df^{s - 1}(f(\zeta))| \le K_1 |Df^{s - 1}(c)|$.
Using the formula of~$f$, we obtain
\begin{equation}
\label{e:asymmetric comparison}
|Df^s(\zeta)|
\le
K_1 (|\zeta| / |f^{s - 1}(c)|)^{d - 1} |Df^s(c)|.
\end{equation}
Using Koebe distortion theorem and the formula of~$f$ again, we obtain
$$ |f^{s - 1}(c)|
\ge
K_2^{-1} |Df^{s - 1}(f(\zeta))| \cdot |f(\zeta) - c|
=
(d K_2)^{-1} |Df^s(\zeta)| \cdot |\zeta|, $$
where $K_2>1$ is a constant depending only on $\tau_0$.
Together with~\eqref{e:asymmetric comparison}, we obtain~$|Df^s(\zeta)|^d \le d^{d - 1} K^d |Df^s(c)|$,
where $K=\max \{ K_1, K_2 \}$.
Combined with~\eqref{e:first half}, this implies the desired inequality with~$C_{@} = (2d)^d K^{d + 1}$.
\end{proof}

For each topological disk~$V$ containing~$0$, put
$$ M_+(V)
\=
\inf \left\{ |Df^n(c)| : n \ge 1, f^n(0)\in V \right\}, $$
$$ M_-(V)
\=
\inf \left\{ |Df^n(\zeta)|^d : n \ge 1, \zeta \in V, f^n(\zeta) = 0 \right\},$$
and
$$ M(V)
\=
\max \{ M_+(V), M_-(V) \}. $$
\begin{lem}
\label{l:diameter derivative estimate}
For every constant~$\tau_0 > 0$ there is a constant~$C_{!} > 0$ such that the following property holds.
Let~$V$ and~$V'$ be topological disks containing~$0$, such that~$\overline{V'} \subset V$, $\mmod(V; V') \ge \tau_0$, and such that every critical pull-back of~$V$ is contained in~$V'$.
Then for every critical pull-back~$U'$ of~$V'$, we have
$$ \diam(f(U'))
\le
C_{!} M(V)^{-1} \diam(f(V')). $$
\end{lem}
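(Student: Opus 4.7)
The plan is to argue by induction on the integer $s\ge 1$ such that $U'$ is a critical pull-back of $V'$ by $f^s$, taking $C_{!}=C_{@}$, where $C_{@}$ is the constant provided by Lemma~\ref{lem:koebe} applied to~$\tau_0$. Given such~$U'$, I would introduce the ``pull-back along the critical orbit'' sequence $V_0,V_1,\ldots,V_s$ defined by $V_0\=V$ and, for $j\ge 1$, $V_j$ the connected component of $f^{-1}(V_{j-1})$ containing $f^{s-j}(0)$. Inductively, $V_j$ is the component of $f^{-j}(V)$ containing $f^{s-j}(0)$; in particular $V_{s-1}\ni c$ and $V_s$ is the critical pull-back of~$V$ by~$f^s$. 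Since~$0$ is the only critical point of~$f$, the map $f\colon V_j\to V_{j-1}$ is conformal precisely when $0\notin V_j$. Let $j_0\in\{1,\ldots,s\}$ be the smallest index with $0\in V_{j_0}$, which exists since $0\in V_s$.

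If $j_0=s$ (subsuming the base case $s=1$), the composition $f^{s-1}\colon V_{s-1}\to V$ is conformal, and I would apply Lemma~\ref{lem:koebe} with $W\=V_{s-1}$. The set $W'$ produced by the lemma coincides with $f(U')$, since both are pull-backs of~$V'$ by $f^{s-1}$ contained in~$W$ and $f^{s-1}|_{W}$ is injective. For the point~$\zeta$ in the lemma I would choose a preimage in~$U'$ of the unique $w\in W'$ with $f^{s-1}(w)=0$ (such a preimage exists since $f\colon U'\to W'$ is $d$-to-$1$). Then $\zeta\in V'\subset V$, $f^s(\zeta)=0$, and $f^s(0)\in V$, yielding $|Df^s(\zeta)|^d\ge M_{-}(V)$ and $|Df^s(c)|\ge M_{+}(V)$; so $\max\{|Df^s(c)|,|Df^s(\zeta)|^d\}\ge M(V)$, and Lemma~\ref{lem:koebe} gives the desired bound.

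For $j_0<s$ I would use the induction hypothesis. In this case $V_{j_0}\ni 0$ and $f^{j_0}(V_{j_0})=V$, so $V_{j_0}$ is the critical pull-back of~$V$ by~$f^{j_0}$, and the hypothesis of the lemma forces $V_{j_0}\subset V'$. Since $V_{j_0}$ also contains $f^{s-j_0}(0)$, this gives $f^{s-j_0}(0)\in V'$, so there is a well-defined critical pull-back $U'_{s-j_0}$ of~$V'$ by~$f^{s-j_0}$. The crucial step is to establish the containment $U'\subset U'_{s-j_0}$: the image $f^{s-j_0}(U')$ is connected, contains $f^{s-j_0}(0)$, and is contained in the component of $f^{-j_0}(V)$ through $f^{s-j_0}(0)$, namely $V_{j_0}\subset V'$; hence $f^{s-j_0}(U')\subset V'$, and $U'$ lies in the component of $f^{-(s-j_0)}(V')$ through~$0$, which is $U'_{s-j_0}$. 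Since $s-j_0<s$, the induction hypothesis gives $\diam(f(U'_{s-j_0}))\le C_{!}M(V)^{-1}\diam(f(V'))$, and the bound for~$U'$ follows from $\diam(f(U'))\le\diam(f(U'_{s-j_0}))$.

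The main obstacle is the case $j_0<s$: Lemma~\ref{lem:koebe} cannot be applied directly at level~$s$ because $f^{s-1}$ fails to be conformal on the pull-back~$V_{s-1}$ of~$V$. The reduction to a strictly smaller time rests essentially on the hypothesis ``every critical pull-back of~$V$ is contained in~$V'$'', invoked at the first index $j_0<s$ at which the sequence $V_j$ captures the critical point; this containment is what forces the critical orbit back into~$V'$ at the reduced time~$s-j_0$ and permits the induction to close.
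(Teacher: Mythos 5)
Your proposal is correct and follows essentially the same route as the paper: reduce, via the hypothesis that every critical pull-back of~$V$ lies in~$V'$, to the case where the critical pull-back of~$V$ at time~$s$ maps $d$-to-$1$ onto~$V$ (your case $j_0=s$), and then apply Lemma~\ref{lem:koebe} with $\max\{|Df^s(c)|,|Df^s(\zeta)|^d\}\ge M(V)$. You merely make explicit the induction that the paper compresses into ``it suffices to consider the case that $U$ is not contained in any critical pull-back of~$V'$''.
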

\begin{proof}
Let~$C_{@}$ be the constant given by Lemma~\ref{lem:koebe}.
Let~$n \ge 1$ be an integer such that~$f^n(0)$ is in~$V'$, and let~$U$ (resp.~$U'$) be the pull-back of~$V$ (resp.~$V'$) by~$f^n$ containing~$0$.
It suffices to consider the case that~$U$ is not contained in any critical pull-back of~$V'$.
In this case we claim that~$f^n: U \to V$ is $d$-to-$1$.
Otherwise, there would exist~$m$ in~$\{1, \ldots, n - 1 \}$ such that~$f^m(U)$ contains~$0$.
This implies that~$f^m(U)$ is contained in the pull-back of~$V$ by~$f^{n - m}$ containing~$0$, which is contained in~$V'$.
Thus~$f^m(U)$ is contained in~$V'$, and therefore~$U$ is contained in the pull-back of~$V'$ by~$f^m$ containing~$0$.
We thus obtain a contradiction that shows that~$f^n : U \to V$ is $d$-to-$1$.
Then the lemma follows from Lemma~\ref{lem:koebe} with~$C_{!} = C_{@}$.
\end{proof}

Given a topological disk~$V$, a point~$x$ of~$V$, and~$\Delta > 0$, denote by~$B_V(x, \Delta)$ the ball for the hyperbolic metric of~$V$ centered at~$x$ and of radius~$\Delta$.
By Koebe distortion theorem, for every~$\Delta > 0$ there is~$\xi > 1$ such that for every topological disk~$V$, and every~$x$ in~$V$, we have
$$ \sup_{z \in \partial B_V(x, \Delta)} |z-x|
\le
\xi
\inf_{z \in \partial B_V(x, \Delta)} |z-x|. $$
\begin{proof}[Proof of Theorem~\ref{thm:lb2bc}]
As mentioned above, part~$2$ is a direct consequence of part~$1$, and the combination of~\cite[Theorems~$1.3$ and~$1.4$]{LiSheIMRN}, Lemma~\ref{l:backward contraction}, and~\cite[Corollary~$8.3$]{Riv07}.

To prove part~$1$, note that for~$\delta > 0$ the number~$\mmod(\tB(\delta); \tB(\delta/2))$ is independent of~$\delta$; denote it by~$\tau_1$.
On the other hand, let~$\tau$ be the constant given by the \emph{a priori} bounds hypothesis.
Note that there is a constant~$\Delta > 0$ such that if~$U$ is a topological disk satisfying~$\overline{U} \subset V$ and~$\mmod(V; U) \ge \tau$, then the diameter of~$U$ with respect to the hyperbolic metric of~$V$ is bounded by~$\Delta$.
Moreover, the number~$\mmod ( V; B_V(0, \Delta))$ is bounded from below by a constant~$\tau_0>0$ that is independent of~$V$.
Let~$C_{!}$ be the constant given by Lemma~\ref{l:diameter derivative estimate} with~$\tau_0$ replaced by~$\min\{ \tau_0, \tau_1 \}$.
Let~$\xi > 1$ be the constant defined above for this choice of~$\Delta$, and put~$\eta \= 4 C_{!} \xi^d$.

Let~$\varepsilon > 0$ be sufficiently small so that for every topological disk~$V$ containing~$0$ and satisfying~$\diam(V) < \varepsilon$, we have~$M(V) \ge \eta R_0$.
By the \emph{a priori} bounds hypothesis, there is such a topological disk so that in addition for every critical pull-back~$U$ of~$V$ we have~$\overline{U} \subset V$ and~$\mmod(V; U) \ge \tau$.
By our choice of~$\Delta$, this implies that~$U$ is contained in~$V' \= B_V(0, \Delta)$.
So the hypotheses of Lemma~\ref{l:diameter derivative estimate} are satisfied for these choices of~$V$, and~$V'$.
It follows that for every critical pull-back~$U'$ of~$V'$, we have
$$ \diam(f(U'))
\le
C_{!} M(V)^{-1} \diam(f(V'))
\le
4^{-1} \xi^{-d} R_0^{-1} \diam(f(V')). $$
Thus, if we put $\delta_0 \= \inf_{z \in \partial V'} |z|^d$, then~$\diam(f(V')) \le 2 \xi^d \delta_0$, and for every critical pull-back~$U''$ of~$\tB(\delta_0)$ we have
$$ \diam(f(U''))
\le
(2 R_0)^{-1} \delta_0. $$
This proves~$R(\delta_0) \ge 2 R_0$.

To complete the proof part~$1$, for each integer~$n \ge 1$ put~$\delta_n \= 2^{-n} \delta_0$.
We prove by induction that for each~$n$ we have~$R(\delta_n) \ge 2 R_0$.
The case~$n = 0$ is shown above.
Let~$n \ge 1$ be an integer such that~$R(\delta_{n - 1}) \ge 2R_0 > 2$.
Then every critical pull-back~$\tB(\delta_{n - 1})$ is contained in~$\tB(\delta_n)$.
So by Lemma~\ref{l:diameter derivative estimate}, for every critical pull-back~$U'$ of~$\tB(\delta_n)$ we have
$$ \diam(f(U'))
\le
2 C_{!} M(\tB(\delta_{n - 1}))^{-1} \delta_n
\le
(2 R_0)^{-1} \delta_n. $$
This proves~$R(\delta_n) \ge 2R_0$ and completes the proof of the induction step.
Thus for every~$n \ge 0$ we have~$R(\delta_n) \ge 2 R_0$, and therefore for every~$\delta$ in~$(0, \delta_0)$ we have~$R(\delta) \ge R_0$.
This completes the proof of part~$1$ and of the theorem.
\end{proof}

\begin{proof}[Proof of Theorem~\ref{thm:main} assuming Theorem~\ref{thm:reduced}]
Let~$\mu$ denote the conformal measure of minimal exponent~$h=\HDhyp(f)$.
Assume by contraction that $\mu$ has an atom at $0$.
Then $\cP(0, h)<\infty$, hence for each $t>h$ we have $\cP(0, t)<\infty$.
By Theorem~\ref{thm:reduced}, it follows that $\sum_{n=0}^\infty |Df^n(c)|^{-t/d}<\infty$, hence $|Df^n(c)|\to\infty$ as $n\to\infty$.
By Lemma~\ref{l:backward contraction} and part~$1$ of Theorem~\ref{thm:lb2bc}, the map $f$ is backward contracting.
But then, \cite[Proposition~$7.3$]{RivShe1004} implies~$\cP(0, h)=\infty$.
We thus obtain a contradiction that completes the proof of the theorem.
\end{proof}
\section{Convergence of Poincar{\'e} series implies forward summability}
\label{s:direct implication of reduced}
The purpose of this section is to prove the following proposition, giving one of the implications in Theorem~\ref{thm:reduced}.

\begin{prop}
\label{p:direct implication}
Suppose~$f$ satisfies the \emph{a priori} bounds condition.
Then for each~$t > 0$ such that~$\cP(0, t)$ is finite, the sum $\sum_{n=1}^\infty |Df^n(c)|^{-t/d}$ is also finite.
\end{prop}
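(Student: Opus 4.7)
The plan is to invoke part~$2$ of Theorem~\ref{thm:lb2bc}, applied with exponent $t/d$ in place of~$t$, which reduces the desired summability $\sum_{n \ge 1} |Df^n(c)|^{-t/d} < \infty$ to the finiteness of the integral $\int_{0^+}^{\infty} R(\delta)^{-t/d} \frac{d\delta}{\delta}$. The task then becomes: bound this integral from above using the \emph{a priori} bounds hypothesis and the assumption $\cP(0, t) < \infty$.

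To estimate the integral, I would partition $(0, \delta_0]$ into subintervals $[\delta_{k+1}, \delta_k]$ indexed by a nested sequence of close-return scales for the critical orbit. Using the puzzle pieces supplied by the \emph{a priori} bounds, one defines a cascade $V_0 \supset V_1 \supset \cdots$ where each $V_k$ is the critical pull-back of $V_{k-1}$ by $f^{n_k}$, with $n_k$ the first return time of~$0$ to $V_{k-1}$. On the subinterval of scales corresponding to $V_k$, the function $R(\delta)^{-1}$ is controlled, up to a multiplicative constant, by the normalized diameter $\diam(V_k)/\diam(V_{k-1})$; the Koebe-type distortion control made available by the definite moduli between consecutive puzzle pieces is what makes this step work.

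The crucial observation is that Lemma~\ref{lem:koebe}, applied to the cascade pair $(V_{k-1}, V_k)$, furnishes a preimage $\zeta_k \in f^{-n_k}(0)$ satisfying $|Df^{n_k}(\zeta_k)|^d$ comparable to~$|Df^{n_k}(c)|$. Combining this with the preceding interval estimate, each piece of the integral is bounded by a constant times $|Df^{n_k}(\zeta_k)|^{-t}$. Summing over~$k$, and using that the~$\zeta_k$ lie in the pairwise disjoint fibers $f^{-n_k}(0)$ (since~$0$ is non-periodic), the resulting sum is a subsum of~$\cP(0, t)$, hence finite by hypothesis. This yields the desired bound on the integral and, via the reduction in the first paragraph, proves the proposition.

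The main technical obstacle, to be handled by the forthcoming Lemma~\ref{l:close return integral} advertised in the introduction's outline, is the precise bookkeeping: one must arrange that the subintervals $[\delta_{k+1}, \delta_k]$ tile $(0, \delta_0]$; that the conformality hypothesis of Lemma~\ref{lem:koebe} (requiring $f^{n_k - 1}$ to be conformal on a neighborhood of~$c$) is in force at each level of the cascade, which uses that~$n_k$ is a first return time; and that the constants absorbed into the estimates on each piece are uniform in~$k$, so that the geometric comparability of $R(\delta)^{-1}$ to $\diam(V_k)/\diam(V_{k-1})$ on $[\delta_{k+1}, \delta_k]$ really translates into the cleanly summable bound.
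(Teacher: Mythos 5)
Your architecture is the same as the paper's: reduce, via part~2 of Theorem~\ref{thm:lb2bc} applied with exponent $t/d$, to the finiteness of $\int_{0^+}R(\delta)^{-t/d}\,d\delta/\delta$; cut the range of scales at consecutive close returns; bound each piece by one term of $\cP(0,t)$ through a Koebe estimate at the first-entry pull-back; and sum to obtain a subsum of $\cP(0,t)$. However, the key estimate as you state it does not close. If on the whole subinterval $(\delta_{k+1},\delta_k]$ you only have a $\delta$-independent bound $R(\delta)^{-1}\le C\theta_k$ (with $\theta_k$ the relevant normalized diameter), then
$$\int_{\delta_{k+1}}^{\delta_k}R(\delta)^{-t/d}\,\frac{d\delta}{\delta}\;\le\;C^{t/d}\,\theta_k^{t/d}\,\log\frac{\delta_k}{\delta_{k+1}},$$
and the factor $\log(\delta_k/\delta_{k+1})$, which measures how much deeper the $k$-th close return is than the previous one, is unbounded in $k$. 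A sum $\sum_k a_k\log(1/a_k)$ is not controlled by $\sum_k a_k$, and you cannot trade the logarithm for a slightly smaller exponent, since $\cP(0,t')$ for $t'<t$ need not be finite. What Lemma~\ref{l:close return integral} actually establishes is the pointwise improvement $R(\delta)\ge C^{-1}(\delta/\delta_1)^{1-1/d}\,|Df^{n}(\zeta)|^{d}$ on the interval $(\delta_1,\delta_2]$ where the first-entry time is constant equal to $n$; the exponent $1-1/d>0$ is exactly what makes $\int_1^{\infty}\eta^{-1-(t/d)(1-1/d)}\,d\eta$ converge and eliminates the logarithm. Tracking this $\delta$-dependence of $R(\delta)$ inside each subinterval is the heart of the lemma, not a matter of uniformity of constants in $k$.

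Two further points need attention. First, to know that $R(\delta)$ is governed by the first-entry pull-back at all, rather than by some deeper pull-back of $\tB(\delta)$ containing $c$, the paper uses that $f$ is backward contracting (Lemma~\ref{l:non-critical bc}). Under your hypotheses this is not free: it follows from the fact that $\cP(0,t)<\infty$ forces $\min_{\zeta\in f^{-n}(0)}|Df^n(\zeta)|\to\infty$, combined with part~1 of Theorem~\ref{thm:lb2bc} (this is precisely why part~1 carries the alternative hypothesis on preimages of $0$) and Lemma~\ref{l:backward contraction}; your sketch omits this step. Second, watch the normalization: since $\tB(\delta)=B(0,\delta^{1/d})$, the quantity comparable to $R(\delta)^{-1}$ is $\diam(f(V_k))/\diam(f(V_{k-1}))\asymp\bigl(\diam(V_k)/\diam(V_{k-1})\bigr)^d$, not $\diam(V_k)/\diam(V_{k-1})$. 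The Poincar\'e term satisfies $|Df^{n_k}(\zeta_k)|^{-t}\asymp\bigl(\diam(V_k)/\diam(V_{k-1})\bigr)^{t}$, so the exponents only match once the $d$-th power is in place; with your normalization the integral piece would be much larger than the corresponding Poincar\'e term and the final summation would fail.
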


For each $\delta>0$, let $n(\delta)$ be the minimal integer~$n \ge 1$ such that~$f^n(0)$ is in~$\tB(\delta)$, let~$U_\delta$ denote the pull-back of~$\tB(\delta)$ by~$f^{n(\delta)}$ that contains~$0$, and let~$\zeta(\delta)$ be a point of~$f^{- n(\delta)}(0)$ in~$U_\delta$.
Clearly, $n(\delta)$ is non-increasing with~$\delta$, left continuous, and we have $n(\delta) \to \infty$ as~$\delta \to 0$.
In view of Theorem~\ref{thm:lb2bc}, Proposition~\ref{p:direct implication} is a direct consequence of the following lemma.

\begin{lem}
\label{l:close return integral}
There is~$\delta_0 > 0$ such that for each~$t > 0$ there is~$C_{\dag} > 0$ such that the following property holds: For every~$\delta_1$ and~$\delta_2$ in~$(0, \delta_0)$ satisfying~$\delta_1 < \delta_2$, and such that~$n(\cdot)$ is constant on~$(\delta_1, \delta_2]$, we have
$$ \int_{\delta_1}^{\delta_2} R(\delta)^{-t/d} \ \frac{d\delta}{\delta}
\le
C_{\dag} |Df^{n(\delta_2)}(\zeta(\delta_2))|^{-t}.$$
\end{lem}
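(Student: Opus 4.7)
The plan is to derive a pointwise upper bound on $R(\delta)^{-1}$ valid on $(\delta_1, \delta_2]$ and then integrate. Write $N := n(\delta_2)$, which equals $n(\delta)$ for every $\delta$ in the interval, and set $\zeta := \zeta(\delta_2)$. Since $N$ is the minimal $n \ge 1$ with $f^n(0) \in \tB(\delta_2)$, the orbit $f(0), \ldots, f^{N-1}(0)$ avoids $\tB(\delta)$ for every $\delta \in (\delta_1,\delta_2]$; a first-return argument then yields that $f^N : U_\delta \to \tB(\delta)$ has degree $d$, branched only at~$0$, so $f^{N-1}$ is conformal on the critical-value pull-back $f(U_\delta)$. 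Using the \emph{a priori} bounds I will upgrade this to conformality of $f^{N-1}$ on a neighborhood of $c$ mapping onto a disk $V \supset \tB(\delta_2)$ with $\mmod(V;\tB(\delta_2))$ bounded below by a constant depending only on $\tau$, putting the setting into the hypothesis of Lemma~\ref{lem:koebe} with uniform modulus.

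The key quantitative input is that on $(\delta_1,\delta_2]$ the number $|f^{N-1}(c)| = |f^N(0)| = \delta_1^{1/d}$ is \emph{fixed}, much sharper than the generic bound $|f^{N-1}(c)| \le \diam \tB(\delta) = 2\delta^{1/d}$ that is used in the original proof of Lemma~\ref{lem:koebe}. Running the Koebe chain with this sharp identity, and using $|Df^{N-1}(c)|^{-1} = d\,\delta_1^{(d-1)/d}\,|Df^N(c)|^{-1}$ from the chain rule, will give
$$
\frac{\diam(f(U_\delta))}{\delta}
\;\le\;
C\,|Df^N(c)|^{-1}\left(\frac{\delta_1}{\delta}\right)^{(d-1)/d}.
$$
For the remaining pull-backs of $\tB(\delta)$ containing $c$, corresponding to return times $k > N$ with $f^k(0) \in \tB(\delta)$, the \emph{a priori} bounds provide nested structure with uniformly controlled moduli; iterating the same argument (or invoking Lemma~\ref{l:diameter derivative estimate}) should yield bounds of at most the same order, so the displayed estimate controls the full supremum $R(\delta)^{-1}$.

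Integrating, with $\alpha := (d-1)t/d^2 > 0$ and the substitution $u = \delta/\delta_1$, gives
$$
\int_{\delta_1}^{\delta_2} R(\delta)^{-t/d}\,\frac{d\delta}{\delta}
\;\le\;
C^{t/d}\,|Df^N(c)|^{-t/d}\int_1^{\delta_2/\delta_1} u^{-\alpha-1}\,du
\;\le\;
\alpha^{-1} C^{t/d}\,|Df^N(c)|^{-t/d},
$$
so the $\delta_1,\delta_2$-dependence drops out. Finally the inequality $|Df^N(\zeta)|^d \le C'\,|Df^N(c)|$ derived inside the proof of Lemma~\ref{lem:koebe} converts this into a constant multiple of $|Df^N(\zeta(\delta_2))|^{-t}$, as required. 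The main obstacle is the argument for the $k > N$ pull-backs: while the \emph{a priori} bounds supply the nested structure, converting this into a bound comparable to the $k = N$ estimate requires careful bookkeeping, because the derivatives $|Df^k(c)|$ need not grow monotonically with $k$.
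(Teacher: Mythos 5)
Your outline matches the paper's proof in its broad strokes (a pointwise lower bound on $R(\delta)$ coming from a Koebe estimate at the first entry time $N=n(\delta_2)$, the sharp input $|f^{N-1}(c)|\le\delta_1^{1/d}$ producing the factor $(\delta_1/\delta)^{(d-1)/d}$, and a convergent integral in $u=\delta/\delta_1$), but the step you yourself flag as ``the main obstacle'' is a genuine gap, and the way you propose to close it would not work. The quantity $R(\delta)$ is an infimum over \emph{all} pull-backs of $\tB(\delta)$ containing $c$, and your plan is to estimate each return time $k>N$ separately and show its contribution is ``of at most the same order.'' There is no reason for that: the right-hand side of the lemma involves only $|Df^{N}(\zeta(\delta_2))|$, and the derivatives $|Df^k(c)|$ at later returns have no controlled relation to it. Invoking Lemma~\ref{l:diameter derivative estimate} does not help either, since that lemma bounds diameters in terms of $M(V)$, an infimum over all return times, which again cannot be tied to the specific first-entry derivative appearing in the statement.

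The paper's resolution is structural rather than quantitative: it first proves (Lemma~\ref{l:non-critical bc}) that, because $f$ is backward contracting with $r(\delta)>2$, \emph{every} pull-back of $\tB(\delta)$ containing $c$ at a time $m\ge N$ is contained in the single first-entry pull-back $W_\delta$ of the slightly enlarged disk $\tB(2\delta)$ by $f^{N-1}$. Hence $R(\delta)\ge\delta/\diam(W_\delta)$, and only this one pull-back ever needs to be estimated; the later returns are absorbed by containment, not re-estimated. (The Koebe space is then obtained by showing $f^{N-1}$ is conformal on the pull-back of $\tB(4\delta)$, using minimality of $N$ together with $R(4\delta)\ge 4$, rather than via the \emph{a priori} bounds as you suggest.) Two smaller points: the identity $|f^{N-1}(c)|=\delta_1^{1/d}$ should be an inequality $\le\delta_1^{1/d}$ (the hypothesis only constrains $n(\cdot)$ on $(\delta_1,\delta_2]$); and note that the paper runs the whole estimate directly in terms of $|Df^N(\zeta)|$ via $|\zeta|\le dK\delta_1^{1/d}|Df^N(\zeta)|^{-1}$, so the final conversion from $|Df^N(c)|^{-t/d}$ to $|Df^N(\zeta)|^{-t}$ that you perform at the end is correct but unnecessary in their arrangement.
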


The proof of this lemma is after the following one.

\begin{lem}
\label{l:non-critical bc}
Assume that~$f$ is backward contracting, and for each~$\delta > 0$ let~$W_\delta$ be the pull-back of~$\tB(2\delta)$ by~$f^{n(\delta) - 1}$ containing~$c$.
Then for every sufficiently small~$\delta$ we have~$R(\delta) \ge \delta / \diam(W_\delta)$.
\end{lem}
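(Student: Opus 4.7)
\medskip

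\noindent\textbf{Plan.} The inequality $R(\delta) \ge \delta/\diam(W_\delta)$ is equivalent to the bound $\diam(U) \le \diam(W_\delta)$ for every pull-back $U$ of $\tB(\delta)$ containing $c$, and I would prove the stronger set-theoretic inclusion $U \subset W_\delta$. The condition $c \in U$ gives $f^n(c) = f^{n+1}(0) \in \tB(\delta)$, so by minimality of $n(\delta)$ we must have $n \ge n(\delta)-1$. Passing to the pull-back $\widehat{U}$ of the larger disk $\tB(2\delta)$ by $f^n$ containing $c$, we have $U \subset \widehat{U}$, so it suffices to establish $\widehat{U} \subset W_\delta$.

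The case $n = n(\delta)-1$ is immediate, since the pull-back of $\tB(\delta)$ containing $c$ is contained in the pull-back of $\tB(2\delta)$ containing $c$, which is $W_\delta$ by definition. The case $n = n(\delta)$ does not arise for $\delta$ sufficiently small: its existence would require $f^{n(\delta)+1}(0) \in \tB(\delta)$, but applying the formula $f(z)=z^d+c$ to $f^{n(\delta)}(0) \in \tB(\delta)$ gives $f^{n(\delta)+1}(0) \in B(c,\delta)$, and since $c \ne 0$ (as~$0$ is non-periodic), for $\delta$ small enough the balls $B(c,\delta)$ and $\tB(\delta)=B(0,\delta^{1/d})$ are disjoint.

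For $n \ge n(\delta)+1$, set $k = n - n(\delta)+1 \ge 2$ and let $P$ be the pull-back of $\tB(2\delta)$ by $f^k$ containing $f^{n(\delta)-1}(c) \in \tB(\delta)$; then by connectedness $f^{n(\delta)-1}(\widehat{U}) \subset P$. Once $P \subset \tB(2\delta)$ is established, pulling back along $f^{n(\delta)-1}$ and tracking the connected component through $c$ yields $\widehat{U} \subset W_\delta$, as desired. To prove $P \subset \tB(2\delta)$, apply $f$ once: then $f(P)$ lies in the pull-back $Q$ of $\tB(2\delta)$ by $f^{k-1}$ (with $k-1 \ge 1$) containing $f^{n(\delta)}(c)$. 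The identity $|f^{n(\delta)}(c)-c| = |f^{n(\delta)-1}(c)|^d < \delta$ gives $\dist(Q,c) \le \delta$, and the backward contraction hypothesis ensures $r(\delta) > 2$ for $\delta$ small, which by the definition of $r(\delta)$ with parameter $r = 2$ forces $\diam(Q) < \delta$. Therefore $Q \subset B(c,2\delta)$, so $f(P) \subset B(c,2\delta)$, and consequently $P \subset f^{-1}(B(c,2\delta)) = \tB(2\delta)$.

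The main obstacle is the coordination between pull-backs near~$0$, where the iterate $f^{n(\delta)-1}(c)$ sits, and pull-backs near~$c$, where backward contraction gives direct diameter control. The unicritical formula $f(z)=z^d+c$ provides the precise dictionary between these two regimes, and using it to advance one step forward from a neighborhood of~$0$ to a neighborhood of~$c$ is exactly what allows backward contraction to be invoked in the final estimate.
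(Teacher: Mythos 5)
Your proof is correct and takes essentially the same route as the paper's: reduce the bound on $R(\delta)$ to the inclusion of every pull-back of $\tB(\delta)$ containing $c$ into $W_\delta$, note the return time is at least $n(\delta)-1$, and for later returns push forward by one more iterate of $f$ to a neighborhood of $c$ where $r(\delta)>2$ gives diameter $<\delta$, then pull back into $\tB(2\delta)$. The paper compresses this last step into one sentence and does not isolate the (vacuous) case $m=n(\delta)$; your write-up simply makes these details explicit.
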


\begin{proof}
Let~$\delta_0 > 0$ be sufficiently small so that for every~$\delta$ in~$(0, \delta_0)$ we have~$r(\delta) > 2$.
It suffices to show that for each~$\delta$ in~$(0, \delta_0)$, and every integer~$m \ge 0$ such that~$f^m(c)$ is in~$\tB(\delta)$, the pull-back~$W$ of~$\tB(\delta)$ by~$f^m$ containing~$c$ is contained in~$W_{\delta}$.
Clearly~$m \ge n(\delta) - 1$, and when~$m = n(\delta) - 1$ we have~$W \subset W_{\delta}$.
If~$m \ge n(\delta)$, then our hypothesis~$r(\delta) > 2$ implies that~$f^{n(\delta) - 1}(W)$ is contained in~$\tB(2 \delta)$.
This shows that~$W$ is contained in~$W_{\delta}$, as wanted.
\end{proof}

\begin{proof}[Proof of Lemma~\ref{l:close return integral}]
By Koebe Distortion Theorem there is a constant~$K > 1$ such that for every~$\delta > 0$, every integer~$n \ge 1$, and every pull-back~$W$ of~$\tB(2\delta)$ by~$f^n$ for which the corresponding pull-back of~$\tB(4 \delta)$ is conformal, the distortion of~$f^n$ on~$W$ is bounded by~$K$.
Let~$\delta_0 > 0$ be such that the conclusion of Lemma~\ref{l:non-critical bc} holds for every~$\delta$ in~$(0, \delta_0)$.
Reducing~$\delta_0$ if necessary, assume that for each~$\delta$ in~$(0, 4\delta_0)$ we have~$R(4 \delta) \ge 4$.

To prove the lemma, let~$\delta_1$ and~$\delta_2$ in~$(0, \delta_0)$ be such that~$n(\cdot)$ is constant on~$(\delta_1, \delta_2]$.
Put
$$ n \= n(\delta_2)
\text{ and }
\zeta \= \zeta(\delta_2), $$
and note that~$|f^{n - 1}(c)| = |f^n(0)| \le \delta_1^{1/d}$.
Let~$\delta$ in~$(\delta_1, \delta_2]$, and let~$W_{\delta}$ (resp.~$\hW_{\delta}$) be the pull-back of~$\tB(2 \delta)$ (resp.~$\tB(4 \delta)$) by~$f^{n - 1}$ containing~$c$.
We claim that~$f^{n - 1}$ is conformal on~$\hW_{\delta}$.
Otherwise, we would have~$n \ge 2$, and there would exist~$m$ in~$\{1, \ldots, n - 1 \}$ such that~$f^m(\hW_{\delta})$ contains~$c$.
However, this implies that
$$ \diam(f^m(\hW_{\delta})) \le 4 \delta / R(4 \delta) \le \delta, $$
and therefore that~$f^{m - 1}(\hW_{\delta})$, and hence~$f^{m - 1}(0)$, is contained in~$\tB(\delta)$; we thus obtain a contradiction with the definition of~$n = n(\delta)$ that proves that~$f^{n - 1}$ is conformal on~$\hW_{\delta}$.

Using Koebe distortion theorem and the formula of~$f$, we have
\begin{equation}
\label{l:first entry pull-back}
\diam(W_{\delta})
\le
K (2 \delta)^{1/ d} |Df^{n - 1}(f(\zeta))|^{-1}
=
d K (2 \delta)^{1/ d} |Df^n(\zeta)|^{-1} |\zeta|^{d - 1},
\end{equation}
and
\begin{multline*}
|\zeta|^d
=
|f(\zeta) - c|
\le
K |Df^{n - 1}(f(\zeta))|^{-1} |f^{n - 1}(c)|
\\ \le
K \delta_1^{1/d} |Df^{n - 1}(f(\zeta))|^{-1}
=
d K \delta_1^{1/d} |Df^n(\zeta)|^{-1} |\zeta|^{d - 1},
\end{multline*}
and therefore~$|\zeta| \le d K \delta_1^{1/d} |Df^n(\zeta)|^{-1}$.
Hence, letting~$C_{\ddag} \= 2^{1/d} (d K)^d$, by~\eqref{l:first entry pull-back} we have
$$ \diam(W_{\delta})
\le
C_{\ddag} \delta_1 (\delta / \delta_1)^{1/d} |D f^n(\zeta)|^{-d}. $$
Together with Lemma~\ref{l:non-critical bc}, this implies
$$ R(\delta) \ge C_{\ddag}^{-1} (\delta / \delta_1)^{1 - 1/d} |D f^n(\zeta)|^d. $$
The desired assertion follows with~$C_{\dag} = C_{\ddag}^{t/d} \int_1^{\infty} \eta^{- 1 - (t/d)(1 - 1/d)} \ d \eta$.
\end{proof}

\section{Forward summability implies backward summability}
\label{s:reverse implication of reduced}
In this section we complete the proof of Theorem~\ref{thm:reduced}.
After some estimates in~\S\ref{ss:thickened grandchildren}, the proof of Theorem~\ref{thm:reduced} is in~\S\ref{ss:proof of reduced}.

\subsection{Thickened grandchildren}
\label{ss:thickened grandchildren}
Throughout this section, assume~$f$ is backward contracting, so that~$R(\delta) \to \infty$ as~$\delta \to 0$, see~\S\ref{ss:backward contraction}.
Moreover, fix~$t > 0$, put~$\tau := 2^{-d}$, and let~$\delta_0 > 0$ be given by Lemma~\ref{l:child contribution} with~$s = t / d$.
For every integer~$q \ge 0$, let~$V_q$ be given by Lemma~\ref{l:child contribution} with~$\delta = \tau^q \delta_0$, so that
$$ \tB(\tau^q \delta_0/2) \subset V_q \subset \tB(\tau^q \delta_0). $$
Note that for every~$q \ge 0$ the set~$\overline{V_{q + 1}}$ is contained in~$V_q$ and~$\mmod(V_q; V_{q + 1})$ is bounded from below independently of~$q$.
Reducing~$\delta_0$ if necessary, assume that for every~$\delta$ in~$(0, \delta_0]$ we have~$R(\delta) > 2\tau^{-1}$.

For each integer~$q \ge 0$, let~$\cN(q)$ be the set of all integers~$n \ge 1$ such that~$f^n(0)$ is in~$V_q$, and such that the pull-back~$W_n(q)$ of~$V_q$ by~$f^n$ containing~$0$ is a child of~$V_q$.
By our choice of~$\delta_0$, the set~$W_n(q)$ is contained in~$\tB(\tau^{q + 1}\delta_0/2)$, and hence in~$V_{q + 1}$; denote by~$p_n(q)$ the largest integer~$p \ge q$, such that~$W_n(q)$ is contained in~$V_{p + 1}$.
Then we have
$$ \diam(f(W_n(q))) \ge \tau^{p_n(q) + 2} \delta_0/2. $$
Combined with Lemma~\ref{l:child contribution}, this implies
\begin{equation}
  \label{e:discretized child contribution}
  \sum_{n \in \cN(q)} \tau^{t \cdot p_n(q) /d}
\le
(1 - 2^{- t/d})^{-1} \left( 2 R(\tau^q \delta_0)^{-1} \tau^{q - 2} \right)^{t /d}.
\end{equation}

Given an integer~$q \ge 0$, let~$\cS(q)$ be the set of all finite sequences of integers~$(n_1, \ldots, n_k)$, such that there are integers
$$ p_0 := q, p_1, \ldots, p_k, $$
satisfying the following property: For each~$i$ in~$\{1, \ldots, k\}$ the integer~$n_i$ is in~$\cN(p_{i - 1})$, and~$p_i = p_{n_i}(p_{i - 1})$.
Note that in this case for each~$i$ in~$\{1, \ldots, k \}$ we have~$p_i \ge p_{i - 1} + 1$.
In the situation above, we put~$p_{(n_1, \ldots, n_k)}(q) := p_k$.

Let~$q \ge 0$ be an integer.
For each $k \ge 1$ and~$\bfn = (n_1, \ldots, n_k)$ in~$\cS(q)$, put
$$ |\bfn| := n_1 + \cdots + n_k
\text{ and }
k(\bfn) := k. $$
Moreover, let~$Z_{\bfn}(q)$ be the set of all points~$z$ of~$f^{-|\bfn|}(0)$ such that~$f^{|\bfn|}$ maps a neighborhood of~$z$ conformally onto~$V_{q + 1}$, and such that in the case~$k \ge 2$, for each~$i$ in~$\{2, \ldots, k \}$ the point~$f^{n_k + \cdots + n_{i + 1}}(z)$ is in the pull-back of~$V_{p_{(n_1, \ldots, n_{i - 1})}}$ by~$f^{n_i}$ containing~$0$.
Note that~$\# Z_{\bfn}(q) \le d^{k(\bfn)}$, and that~$Z_{\bfn}(q)$ is contained in~$V_{p_{\bfn}(q) + 1}$.

The purpose of this section is to prove the following.
\begin{prop}
\label{prop:poinbad}
Suppose~$f$ is backward contracting, and put
$$ C_{\#} := 2^{1 + 2t/d} d (1 - 2^{- t/d})^{-1} \tau^{- 2 t / d}. $$
Then, for all~$q \ge 0$ sufficiently large we have
$$ \sum_{\bfn \in \cS(q)} \sum_{z \in Z_{\bfn}(q)} \left| Df^{|\bfn|}(z) \right|^{-t}
\le
C_{\#} R(\tau^q \delta_0)^{-t/d}. $$
\end{prop}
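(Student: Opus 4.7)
The plan is to establish the estimate inductively on the length $k = k(\bfn)$ of the sequence, summing the contributions level by level using \eqref{e:discretized child contribution} together with a pointwise derivative bound coming from Koebe distortion. Throughout, I write $S(q) := \sum_{\bfn \in \cS(q)} \sum_{z \in Z_{\bfn}(q)} |Df^{|\bfn|}(z)|^{-t}$.

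For each $\bfn = (n_1, \ldots, n_k) \in \cS(q)$ with associated scales $p_0 = q < p_1 < \cdots < p_k$, and each $z \in Z_{\bfn}(q)$, the definition of $Z_{\bfn}(q)$ gives a neighborhood $\Omega_z$ of $z$ on which $f^{|\bfn|}$ is conformal with image $V_{q + 1}$. Unraveling the chain conditions---each $y_i := f^{n_{i+1} + \cdots + n_k}(z)$ lies in the child $W_{n_i}(p_{i-1}) \subset V_{p_i + 1}$ for $2 \le i \le k$---and exploiting the iterated structure of conformal pull-backs, I would first verify that $\Omega_z \subset V_{p_k + 1}$. Koebe distortion applied with the uniformly positive modulus $\mmod(V_q; V_{q+1})$, combined with $\diam(V_p) \asymp (\tau^p \delta_0)^{1/d}$, would then produce a constant $C_1 > 0$ with
$$ |Df^{|\bfn|}(z)|^{-t} \le C_1 \tau^{t(p_k - q)/d}. $$
Since $\# Z_{\bfn}(q) \le d^k$, summing over $z$ yields
$$ \sum_{z \in Z_{\bfn}(q)} |Df^{|\bfn|}(z)|^{-t} \le C_1\, d^k\, \tau^{t(p_k - q)/d}. $$

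To sum over $\bfn \in \cS(q)$, I would set $\gamma := (2\tau^{-2})^{t/d}(1 - 2^{-t/d})^{-1}$, so that \eqref{e:discretized child contribution} reads
$$ \sum_{n \in \cN(p)} \tau^{t(p_n(p) - p)/d} \le \gamma\, R(\tau^{p}\delta_0)^{-t/d}. $$
Taking $q$ so large that $R(\tau^{p}\delta_0) \ge R(\tau^q\delta_0)$ for all $p \ge q$ (which is possible because $R(\delta) \to \infty$ as $\delta \to 0$ under backward contraction), iterating this inequality along the chain yields
$$ \sum_{\bfn \in \cS(q),\, k(\bfn) = k} \tau^{t(p_k - q)/d} \le \gamma^k R(\tau^q \delta_0)^{-k t /d}. $$
Summing over $k \ge 1$,
$$ S(q) \le C_1 \sum_{k \ge 1} (d \gamma)^k R(\tau^q \delta_0)^{-k t/d}. $$
For $q$ sufficiently large that $d\gamma R(\tau^q\delta_0)^{-t/d} \le 1/2$, the geometric series is dominated by $2 d \gamma R(\tau^q\delta_0)^{-t/d}$, and careful tracking of the constants (possibly enlarging $q$ further to absorb the Koebe constant $C_1$ into the geometric decay) delivers the stated bound $S(q) \le C_{\#} R(\tau^q\delta_0)^{-t/d}$.

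The principal technical obstacle is the pointwise derivative estimate. Two delicate points arise: first, the inclusion $\Omega_z \subset V_{p_k + 1}$ must be verified from the chain conditions, by inductively showing that each stage of the conformal pull-back of $V_{q+1}$ remains within the corresponding nested nice set $V_{p_i + 1}$; second, the Koebe constant $C_1$ must be absorbed into the explicit form of $C_{\#}$ stated in the proposition, which is accomplished by using the flexibility in choosing $\delta_0$ (to enlarge the annular modulus of $V_q \setminus V_{q+1}$) together with the rapid decay of the geometric series for large $q$.
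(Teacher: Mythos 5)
Your strategy coincides with the paper's in its two essential ingredients: a pointwise bound of the form $\sum_{z\in Z_{\bfn}(q)}|Df^{|\bfn|}(z)|^{-t}\le C_1 d^{k}\tau^{t(p_k-q)/d}$ (the paper's Lemma~\ref{l:contribution of a bad chain}), followed by a recursive application of~\eqref{e:discretized child contribution} along the chain (the paper's Lemma~\ref{l:estimate of bad}); only the bookkeeping differs, since you sum a geometric series over the chain length $k$ while the paper runs an induction on the truncation $|\bfn|\le m$. However, one step fails as written. You take ``$q$ so large that $R(\tau^{p}\delta_0)\ge R(\tau^q\delta_0)$ for all $p\ge q$'' and justify it by $R(\delta)\to\infty$. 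That divergence only guarantees \emph{arbitrarily large} $q$ with this running-minimum property, not \emph{all sufficiently large} $q$: the function $R$ is not monotone (the paper records only $R(\delta)\ge(\delta/\delta')R(\delta')$ for $\delta<\delta'$), and a sequence tending to infinity can dip below $R(\tau^q\delta_0)$ at some $p>q$ for every $q$ outside a sparse set. Since the proposition must hold for every large $q$ (it is summed over $q\ge Q+1$ in the proof of Theorem~\ref{thm:reduced}), this is a genuine gap. It is easily repaired: run the induction on $k$ with $\rho_q:=\inf_{p\ge q}R(\tau^p\delta_0)$, which is nondecreasing and tends to infinity, obtaining $\sum_{k(\bfn)=k}\tau^{t(p_k-q)/d}\le\gamma R(\tau^q\delta_0)^{-t/d}\bigl(\gamma\rho_{q+1}^{-t/d}\bigr)^{k-1}$ and then summing the geometric series; the paper's inequality~\eqref{e:a priori bc} plays exactly this role of a uniform lower bound on $R$ at all finer scales.

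Two smaller remarks. Your pointwise estimate should not rely on Koebe distortion over $\mmod(V_q;V_{q+1})$: with $\tau=2^{-d}$ fixed and $\tB(\tau^q\delta_0/2)\subset V_q\subset\tB(\tau^q\delta_0)$, that modulus is a constant depending only on $d$ and cannot be enlarged by adjusting $\delta_0$, so your $C_1$ stays unquantified and the stated value of $C_{\#}$ is out of reach by the absorption you describe. The paper instead applies the Schwarz lemma to the inverse branch from $V_{q+1}\supset\tB(\tau^{q+1}\delta_0/2)$ into $U\subset V_{p_k+1}\subset\tB(\tau^{p_k+1}\delta_0)$, yielding the explicit factor $2^{t/d}$ that appears in $C_{\#}$; since the exact constant is irrelevant downstream, this point is cosmetic. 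Finally, your identification of the inclusion $\Omega_z\subset V_{p_k+1}$ as the structural fact to verify, via the nested children $W_{n_i}(p_{i-1})\subset V_{p_i+1}$, is correct and matches what the paper uses.
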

The proof of this proposition is given after the following lemma.
\begin{lem}
\label{l:contribution of a bad chain}
Suppose~$f$ is backward contracting.
Then, for every integer~$q \ge 0$ and every~$\bfn$ in~$\cS(q)$ we have
$$ \sum_{z\in Z_{\bfn}(q)} \left| Df^{|\bfn|}(z) \right|^{-t}
\le
2^{t/d} d^{k(\bfn)} \left(\tau^{p_{\bfn}(q) - q} \right)^{t/d}. $$
\end{lem}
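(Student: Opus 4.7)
My plan is to use the crucial fact that $0$ cannot lie in the conformal pull-back $U \ni z$ of $V_{q+1}$ by $f^{|\bfn|}$ (since $0$ is a critical point of $f^{|\bfn|}$, being a critical point of $f$) together with the remark that $Z_{\bfn}(q) \subset V_{p_{\bfn}(q)+1}$, and then apply the Koebe $1/4$ theorem to an inverse branch of $f^{|\bfn|}$.

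Since $\#Z_{\bfn}(q) \leq d^{k(\bfn)}$, it suffices to establish the pointwise lower bound $\left| Df^{|\bfn|}(z) \right| \geq 2^{-1/d} \tau^{-(p_{\bfn}(q)-q)/d}$ for each $z \in Z_{\bfn}(q)$, since then raising to the $-t$ power and summing over $Z_{\bfn}(q)$ yields the desired inequality.

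To derive the pointwise bound, let $U$ denote the conformal pull-back component of $V_{q+1}$ by $f^{|\bfn|}$ containing $z$, which exists by the definition of $Z_{\bfn}(q)$. Since $Df(0)=0$ we have $Df^{|\bfn|}(0)=0$, so $0$ is a critical point of $f^{|\bfn|}$; conformality of $f^{|\bfn|}$ on $U$ therefore forces $0 \notin U$. Applying the Koebe $1/4$ theorem to the branch $(f^{|\bfn|})^{-1} : V_{q+1} \to U$ restricted to the inscribed disk $\tB(\tau^{q+1}\delta_0/2) \subset V_{q+1}$ of radius $r := (\tau^{q+1}\delta_0/2)^{1/d}$, we obtain that $U$ contains a ball of radius $r/(4\left|Df^{|\bfn|}(z)\right|)$ around $z$. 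Since this ball cannot contain $0$, we get $|z| \geq r/(4\left|Df^{|\bfn|}(z)\right|)$, hence
$$\left|Df^{|\bfn|}(z)\right| \geq \frac{(\tau^{q+1}\delta_0/2)^{1/d}}{4|z|}.$$
Combined with $|z| \leq (\tau^{p_{\bfn}(q)+1}\delta_0)^{1/d}$ (from $z \in V_{p_{\bfn}(q)+1} \subset \tB(\tau^{p_{\bfn}(q)+1}\delta_0)$) and the identity $\tau = 2^{-d}$, this yields $\left|Df^{|\bfn|}(z)\right| \geq 2^{-1/d-2} \cdot 2^{p_{\bfn}(q)-q}$, which matches the required lower bound up to the universal factor $1/4$ from the Koebe theorem.

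The main obstacle is absorbing this universal $1/4$ factor to recover the precise $2^{t/d}$ constant in the lemma statement. This will likely require either a refined Koebe-type estimate adapted to the specific shape of $V_{q+1}$ (whose inscribed and circumscribed $\tB$-disks differ in radius only by the factor $2^{1/d}$), or an alternative argument using the Schwarz--Pick equality $\left|Df^{|\bfn|}(z)\right| = \rho_U(z)/\rho_{V_{q+1}}(0)$ with sharper lower bounds on $\rho_U(z)$ that exploit the fact that $\partial U$ has a specific structure inherited via the conformal isomorphism from $\partial V_{q+1}$.
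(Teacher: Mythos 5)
Your argument is essentially sound but, as you yourself note, it only proves the lemma with the constant $4^t\cdot 2^{t/d}$ in place of $2^{t/d}$, and you leave the removal of the Koebe factor $1/4$ as an open obstacle. That obstacle disappears if you drop the Koebe $1/4$ theorem (and the observation $0\notin U$, which is then not needed at all) and instead argue as the paper does: the inverse branch $g=(f^{|\bfn|})^{-1}$ with $g(0)=z$ is defined on all of $V_{q+1}\supset \tB(\tau^{q+1}\delta_0/2)$ and has image $U\subset V_{p_{\bfn}(q)+1}\subset \tB(\tau^{p_{\bfn}(q)+1}\delta_0)$, so it maps the round disk of radius $r_1=(\tau^{q+1}\delta_0/2)^{1/d}$ centered at $0$ holomorphically into the round disk of radius $r_2=(\tau^{p_{\bfn}(q)+1}\delta_0)^{1/d}$ centered at $0$. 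The Schwarz--Pick lemma (which does not require $g(0)=0$: after rescaling, $|g'(0)|\le 1-|g(0)|^2\le 1$) gives $|g'(0)|\le r_2/r_1$ with no universal constant lost, hence $|Df^{|\bfn|}(z)|\ge r_1/r_2=2^{-1/d}\tau^{-(p_{\bfn}(q)-q)/d}$, which is exactly the pointwise bound you need.

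For fairness I should add that your weaker bound would in fact suffice for the paper's purposes: the lemma is only used to feed Proposition~\ref{prop:poinbad}, where the extra $4^t$ would merely enlarge the constant $C_{\#}$ and affect nothing downstream. So the gap is one of constants in the stated inequality, not of substance; but as a proof of the lemma as written, your argument is incomplete, and the two refinements you propose at the end (a shape-adapted Koebe estimate, or sharper hyperbolic density bounds on $U$) are unnecessary detours compared with the one-line Schwarz--Pick argument above.
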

\begin{proof}
By definition, for each~$z$ in~$Z_{\bfn}(q)$ the map~$f^{|\bfn|}$ maps a neighborhood~$U$ of~$z$ conformally onto~$V_{q+1}$.
Noting that~$U$ is contained in~$V_{p_{\bfn}(q)+1}$, and hence in~$\tB(\tau^{p_{\bfn}(q) + 1} \delta_0)$, by Schwarz' lemma we have
$$ \left| Df^{|\bfn|}(z) \right|
\ge
\frac{(\tau^{q+1} \delta_0 / 2)^{1/d}}{(\tau^{p_{\bfn}(q) + 1} \delta_0)^{1/d}}
\ge
\frac{1}{2^{1/d}} \tau^{- (p_{\bfn}(q) - q)/d}.$$
Since  $\#Z_{\bfn}(q)\le d^{k(\bfn)}$, the desired conclusion follows.
\end{proof}

In view of Lemma~\ref{l:contribution of a bad chain}, Proposition~\ref{prop:poinbad} is a direct consequence of the following lemma.

\begin{lem}
\label{l:estimate of bad}
Suppose~$f$ is backward contracting.
Then for every sufficiently large~$q \ge 0$ we have
$$ \sum_{\bfn \in \cS(q)} d^{k(\bfn)} \tau^{t \cdot p_{\bfn}(q) / q}
\le
2^{1+t/d} d (1 - 2^{-t/d})^{-1} \left( R(\tau^q \delta_0)^{-1} \tau^{q - 2} \right)^{t/d}.$$
\end{lem}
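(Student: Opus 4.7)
\medskip

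\noindent\emph{Proof plan for Lemma~\ref{l:estimate of bad}.}
The plan is to decompose the sum defining $\Sigma_q := \sum_{\bfn \in \cS(q)} d^{k(\bfn)} \tau^{t p_{\bfn}(q)/d}$ according to the induced sequence of scale indices $q = p_0 < p_1 < \cdots < p_k$, and to iterate the single-level bound~\eqref{e:discretized child contribution}. For integers $p \ge 0$ and $p' > p$, let
$$ N(p,p') := \#\{ n \in \cN(p) : p_n(p) = p' \}, $$
so that the number of $\bfn \in \cS(q)$ with $k(\bfn) = k$ and given scale sequence $(p_0,\ldots,p_k)$ equals $\prod_{i=1}^k N(p_{i-1},p_i)$. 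Rewriting,
$$ \Sigma_q \;=\; \sum_{k\ge 1} d^k \sum_{q < p_1 < \cdots < p_k} \tau^{t p_k/d} \prod_{i=1}^k N(p_{i-1}, p_i). $$

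Next, I would set $a_p := (1-2^{-t/d})^{-1} 2^{t/d} \tau^{-2t/d} R(\tau^p \delta_0)^{-t/d}$, so that \eqref{e:discretized child contribution} reads
$$ \sum_{p' > p} N(p,p') \, \tau^{t p'/d} \;\le\; a_p \, \tau^{t p/d}. $$
Because $f$ is backward contracting, $R(\tau^p\delta_0) \to \infty$ as $p\to\infty$, so $a_p \to 0$. Therefore, for all $q$ sufficiently large we have $dA \le 1/2$, where $A := \sup_{p > q} a_p$. Writing
$$ T_j(p) := \sum_{p < p_1 < \cdots < p_j} \tau^{t p_j/d} \prod_{i=1}^j N(p_{i-1}, p_i), $$
a straightforward induction on $j$ shows that $T_j(p) \le A^{j-1} a_p \, \tau^{t p/d}$ for every $p \ge q$: the base case $j=1$ is exactly the displayed consequence of~\eqref{e:discretized child contribution}, and the inductive step unfolds the outermost summation as $T_j(p) = \sum_{p_1>p} N(p,p_1) T_{j-1}(p_1)$, bounding $T_{j-1}(p_1) \le A^{j-2} a_{p_1} \tau^{t p_1/d}$ by the hypothesis and then absorbing $a_{p_1} \le A$ before applying~\eqref{e:discretized child contribution} at scale $p$ once more.

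Summing the resulting geometric series,
$$ \Sigma_q \;=\; \sum_{k\ge 1} d^k \, T_k(q) \;\le\; d \, a_q \, \tau^{t q/d} \sum_{k\ge 1} (d A)^{k-1} \;\le\; 2d \, a_q \, \tau^{t q/d}, $$
which, after substituting the definition of $a_q$, is exactly the claimed inequality. The main delicacy I anticipate is precisely this asymmetric use of $a$: in the leading coefficient we want the \emph{point} value $a_q$, but in the geometric factor we must use the \emph{uniform} bound $A$ over $p > q$. This is what forces us to work at $q$ large enough to guarantee $dA \le 1/2$, and is also the reason why the final constant retains the sharp $R(\tau^q \delta_0)^{-t/d}$ factor rather than a worse $\sup_{p\ge q} R(\tau^p\delta_0)^{-t/d}$; everything else is bookkeeping.
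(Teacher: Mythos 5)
Your argument is correct, and it reaches exactly the constant in the statement (note in passing that the exponent $\tau^{t\cdot p_{\bfn}(q)/q}$ in the lemma as printed is a typo for $\tau^{t\cdot p_{\bfn}(q)/d}$; you work with the intended version, which is the one needed for Proposition~\ref{prop:poinbad}). Your route is, however, organized differently from the paper's. The paper truncates by total time, setting $\Xi_t(q,m)$ to be the sum over $\bfn\in\cS(q)$ with $|\bfn|\le m$, and proves the uniform bound $\Xi_t(q,m)\le 2C_*\left(R(\tau^q\delta_0)^{-1}\tau^{q-2}\right)^{t/d}$ by induction on $m$, the recursion peeling off the first entry $n_1$ and reducing to $\cS(p_{n_1}(q))$ with time budget $m-n_1$; the smallness hypothesis $R(\tau^q\delta_0)>\tau^{-2}(2C_*)^{d/t}$ is what lets the induction close. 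You instead stratify by the combinatorial depth $k$ and the scale sequence $q=p_0<p_1<\cdots<p_k$, prove $T_j(p)\le A^{j-1}a_p\tau^{tp/d}$ by induction on $j$, and sum a geometric series of ratio $dA\le 1/2$. The two are equivalent in substance: your condition $dA\le 1/2$ unwinds to exactly the paper's condition~\eqref{e:a priori bc}, and both inductions rest on the same single-level input~\eqref{e:discretized child contribution}. What your version buys is transparency — the geometric-series structure and the asymmetric roles of the point value $a_q$ versus the uniform bound $A$ (which you correctly flag as the delicate point) are explicit, and there is no need for the auxiliary truncation parameter $m$; the paper's version avoids any discussion of rearranging the infinite sum, since every $\Xi_t(q,m)$ is manifestly finite, whereas you are implicitly invoking Tonelli for a nonnegative double sum, which is harmless but worth a word.
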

\begin{proof}
Put~$C_* := d (1 - 2^{- t/d})^{-1} 2^{t/d}$ and let~$q_0 \ge 0$ be sufficiently large so that for every~$q \ge q_0$ we have
\begin{equation}
  \label{e:a priori bc}
  R(\tau^q \delta_0) > \tau^{-2} \left( 2 C_* \right)^{d/t}.
\end{equation}
For each pair of integers~$q \ge 0$ and~$m \ge 1$, put
$$ \Xi_t(q, m)
:=
\sum_{\substack{\bfn \in \cS(q) \\ |\bfn| \le m}} d^{k(\bfn)} \tau^{t \cdot p_{\bfn}(q) / q}, $$
and~$\Xi_t(q, 0) := 0$.
To prove the lemma, it is enough to show that for every pair of integers~$q \ge q_0$ and~$m \ge 0$ we have
\begin{equation}
  \label{e:finite time estimate of bad}
\Xi_t(q, m)
\le
2 C_* \left( R(\tau^q \delta_0)^{-1} \tau^{q - 2} \right)^{t/d}.
\end{equation}
We proceed by induction in~$m$.
By definition, for every~$q$ we have~$\Xi_t(q, 0) = 0$, so inequality~\eqref{e:finite time estimate of bad} holds trivially when~$m = 0$.
Let~$m \ge 1$ be an integer and suppose that for every~$q \ge q_0$ the inequality~\eqref{e:finite time estimate of bad} holds with~$m$ replaced by~$m - 1$.
Let~$q \ge q_0$ be given and note that for every~$k \ge 2$, and every~$\bfn = (n_1, n_2, \ldots, n_k)$ in~$\cS(q)$, we have
$$ (n_2, \ldots, n_k) \in \cS(p_{n_1}(q))
\text{ and }
p_{(n_2, \ldots, n_k)}(p_{n_1}(q)) = p_{(n_1, \ldots, n_k)}(q). $$
Conversely, for every integer~$n$ in~$\cN(q)$, and every~$k' \ge 1$ and~$(n_1', \ldots, n_{k'}')$ in~$\cS(p_{n}(q))$, we have
$$ (n, n_1', \ldots, n_{k'}') \in \cS(q)
\text{ and }
p_{(n, n_1', \ldots, n_{k'}')}(q) = p_{(n_1', \ldots, n_{k'}')}(p_n(q)). $$
Thus,
\begin{equation*}
\begin{split}
\Xi_t(q, m)
& =
\sum_{\substack{n \in \cN(q) \\ n \le m}} d \left( \tau^{t \cdot p_n(q) / d} + \Xi_t \left( p_n(q), m - n \right) \right)
\\ & \le
\sum_{\substack{n \in \cN(q) \\ n \le m}} d \tau^{t \cdot p_n(q)/d} + d \sum_{\substack{n \in \cN(q) \\ n \le m  - 1}} \Xi_t \left( p_n(q), m - 1 \right).
\end{split}
\end{equation*}
Together with~\eqref{e:discretized child contribution}, and the induction hypothesis, this implies
\begin{multline*}
\Xi_t(q, m)
\le
C_* \left( R(\tau^q \delta_0)^{-1} \tau^{q - 2} \right)^{t/d}
+ d \sum_{\substack{n \in \cN(q) \\ n \le m - 1}} \Xi_t(p_n(q), m - 1)
\\ \le
C_* \left[ \left( R(\tau^q \delta_0)^{-1} \tau^{q - 2} \right)^{t/d}
+ 2d \sum_{\substack{n \in \cN(q) \\ n \le m - 1}} \left( R(\tau^{p_n(q)} \delta_0)^{-1} \tau^{p_n(q) - 2} \right)^{t/d} \right]
\end{multline*}
Using~\eqref{e:a priori bc}, and then~\eqref{e:discretized child contribution} again, we obtain
\begin{equation*}
  \begin{split}
\Xi_t(q, m)
& \le
C_* \left[ \left( R(\tau^q \delta_0)^{-1} \tau^{q - 2} \right)^{t/d}
+ (1 - 2^{- t/d}) 2^{-t/d} \sum_{\substack{n \in \cN(q) \\ n \le m}} \tau^{t \cdot p_n(q) / d} \right]
\\ & \le
2 C_* \left( R(\tau^q \delta_0)^{-1} \tau^{q - 2} \right)^{t/d}.
  \end{split}
\end{equation*}
This completes the proof of the induction step and of the lemma.
\end{proof}

\subsection{Proof of Theorem~\ref{thm:reduced}}
\label{ss:proof of reduced}
The proof of Theorem~\ref{thm:reduced} is at the end of this section, after a couple of lemmas.

Assume~$f$ is backward contracting, fix~$t > \HDhyp(J(f))$, and consider the notation introduced in~\S\ref{ss:thickened grandchildren} for this choice of~$t$.
Put~$\borbit \= \bigcup_{m=1}^\infty f^{-m}(0)$, and for each~$z$ in this set denote by~$m(z) \ge 1$ the unique integer~$m \ge 1$ such that~$f^m(z) = 0$.
Note that for every~$z$ in~$\borbit$ there is an integer~$q \ge 0$ such that the pull-back of~$V_q$ by~$f^{m(z)}$ containing~$z$ is conformal.
Denote by~$q(z)$ the least integer~$q \ge 0$ with this property.
\begin{lem}
\label{l:conformal before bad}
There is a constant~$C_{\&} > 0$ such that for every~$z$ in~$\borbit$ satisfying~$q(z) \ge 1$, and every~$q$ in~$\{0, \ldots, q(z) - 1 \}$, there exists~$\bfn$ in~$\cS(q)$ such that the following hold:
\begin{itemize}
\item
$m(z) \ge |\bfn|$, and~$\zeta(z) := f^{m(z) - |\bfn|}(z)$ is in~$Z_{\bfn}(q)$, and hence in~$V_{p_{\bfn}(q) + 1}$;
\item
$f^{m(z) - |\bfn|}$ maps a neighborhood~$U(z)$ of~$z$ conformally onto~$V_{p_{\bfn}(q)}$;
\item
Denoting by~$\zeta'(z)$ the unique point in~$U(z)$ such that $f^{m(z) - |\bfn|}(\zeta'(z))=0$, we have
$$|Df^{m(z)}(z)|
\ge
C_{\&} \left|Df^{|\bfn|}(\zeta(z)) \right| \left|Df^{m(z) - |\bfn|}(\zeta'(z)) \right|. $$
\end{itemize}
\end{lem}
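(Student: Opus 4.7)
The plan is to extract $\bfn = (n_1, \ldots, n_k)$ recursively from the close returns of the orbit $z \mapsto f(z) \mapsto \cdots \mapsto f^{m(z)}(z) = 0$.  Set $p_0 := q$ and $j_0 := m(z)$.  At step $i \ge 1$, consider the pull-back $W^{(i)}$ of $V_{p_{i-1}}$ by $f^{j_{i-1}}$ containing $z$.  If $f^{j_{i-1}}|_{W^{(i)}}$ is conformal, terminate with $k := i-1$; otherwise let $j_i$ be the largest integer in $\{0, \ldots, j_{i-1}-1\}$ with $f^{j_i}(W^{(i)}) \ni 0$, and set $n_i := j_{i-1} - j_i$ and $p_i := p_{n_i}(p_{i-1})$.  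Since $q < q(z)$, the initial pull-back $W^{(1)}$ is not conformal, so $k \ge 1$; the $j_i$ form a strictly decreasing sequence of nonnegative integers bounded by $m(z)$, forcing termination.

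To verify $\bfn \in \cS(q)$, observe that $f^{j_i}(W^{(i)})$ is a pull-back of $V_{p_{i-1}}$ by $f^{n_i}$ containing $0$; by the maximality of $j_i$, no iterate $f^{j}(W^{(i)})$ with $j_i < j < j_{i-1}$ meets $0$, so $f^{n_i}$ is $d$-to-$1$ on this pull-back, identifying it with the child $W_{n_i}(p_{i-1})$.  Hence $n_i \in \cN(p_{i-1})$, and $p_i = p_{n_i}(p_{i-1})$ by definition.  Setting $\zeta(z) := f^{j_k}(z) = f^{m(z)-|\bfn|}(z)$, we have $f^{|\bfn|}(\zeta(z)) = 0$, and $f^{n_k + \cdots + n_{i+1}}(\zeta(z)) = f^{j_i}(z) \in f^{j_i}(W^{(i)}) = W_{n_i}(p_{i-1})$, which verifies the orbit condition in the definition of $Z_\bfn(q)$.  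The main obstacle is the remaining condition: that $f^{|\bfn|}$ maps a neighborhood of $\zeta(z)$ conformally onto $V_{q+1}$.  The plan is to trace the pull-back of $V_{q+1}$ by $f^{|\bfn|}$ from $\zeta(z)$ through the chain of critical children $W_{n_k}(p_{k-1}), \ldots, W_{n_1}(q)$, and to argue at each stage that the relevant preimage component lies in a non-critical branch.  The available ingredients are the nested inclusions $V_{q+1} \supseteq V_{p_1} \supseteq \cdots \supseteq V_{p_k}$ (from $p_i \ge p_{i-1} + 1$), and control of the position of $f^{j_i}(z)$ in the branched cover $f^{n_i} : W_{n_i}(p_{i-1}) \to V_{p_{i-1}}$; this branched-cover bookkeeping is where I expect the proof to become delicate.

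Finally, for the derivative estimate: by the stopping rule, $U(z) := W^{(k+1)}$ maps conformally onto $V_{p_k}$ via $f^{j_k}$.  Both $z$ and $\zeta'(z)$ lie in $(f^{j_k})^{-1}(V_{p_k+1}) \cap U(z)$, since $f^{j_k}(z) = \zeta(z) \in V_{p_k+1}$ (by the construction of $p_k$) and $f^{j_k}(\zeta'(z)) = 0 \in V_{p_k+1}$.  Because $\mmod(V_{p_k}; V_{p_k+1})$ is bounded below by a universal constant, Koebe distortion applied to the conformal map $f^{j_k}: U(z) \to V_{p_k}$ yields $|Df^{j_k}(z)| \ge C_{\&} |Df^{j_k}(\zeta'(z))|$ for some universal $C_{\&} > 0$.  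Combining with the chain-rule identity $|Df^{m(z)}(z)| = |Df^{j_k}(z)| \cdot |Df^{|\bfn|}(\zeta(z))|$ gives the desired estimate.
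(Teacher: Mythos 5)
Your recursive extraction of $\bfn$ by recording the last critical time of each successive pull-back produces essentially the same combinatorial object as the paper (which packages the construction as an induction on $m(z)$, peeling off one application of $f$ at a time), and your verifications that $n_i\in\cN(p_{i-1})$, that $f^{j_i}(z)\in W_{n_i}(p_{i-1})$ (the orbit conditions in the definition of $Z_{\bfn}(q)$), and of the second and third bullets are correct. The problem is precisely the part you defer: membership of $\zeta(z)$ in $Z_{\bfn}(q)$ requires that $f^{|\bfn|}$ map a neighborhood of $\zeta(z)$ conformally onto $V_{q+1}$, and you give no argument for this. This is not dispensable bookkeeping; it is the condition that makes $Z_{\bfn}(q)$ usable downstream: it is what gives $\# Z_{\bfn}(q)\le d^{k(\bfn)}$ and, via Schwarz's lemma applied on the conformal pull-back of $V_{q+1}$, the derivative bound of Lemma~\ref{l:contribution of a bad chain}, and hence Proposition~\ref{prop:poinbad} and the factorization of $\cP_t(q,m)$ in the proof of Theorem~\ref{thm:reduced}. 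A priori the pull-back of $V_{q+1}$ by $f^{|\bfn|}$ starting from $\zeta(z)$ could recapture the critical point inside one of the children: on $W_{n_i}(p_{i-1})$ the map $f^{n_i}$ is a degree-$d$ cover branched only over $f^{n_i}(0)$, so the relevant preimage component of the (much smaller) intermediate pull-back of $V_{q+1}$ fails to be conformal exactly when that intermediate pull-back contains the critical value $f^{n_i}(0)$. Ruling this out along the whole chain is the real content of the first bullet, and as written your proof does not contain it.

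For comparison, the paper's induction on $m(z)$ at least localizes this difficulty: in the case where the additional iterate of $f$ does not create a new critical pull-back of $U(f(z))$ one has $\zeta(z)=\zeta(f(z))$, so the required conformality is verbatim the induction hypothesis; a fresh check is needed only when a new entry is appended to $\bfn$, i.e.\ for a single branched covering $f^{n'}:W_{n'}(p_{\bfn'}(q))\to V_{p_{\bfn'}(q)}$ rather than for the entire chain at once. If you keep your direct construction, you must supply the analogous one-step argument at each index $i$ and propagate it along the chain; in either formulation, this is the step missing from your write-up.
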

\begin{proof}
The third assertion follows from the first and the second, together with Koebe distortion theorem.
To prove the first and second assertions, we proceed by induction in~$m(z)$.
Let~$z$ be a point in~$\borbit$ such that~$q(z) \ge 1$ and~$m(z) = 1$, and let~$q$ be in~$\{0, \ldots, q(z) - 1 \}$.
Then~$1$ is in~$\cN(q)$, and the desired assertions are easily seen to be satisfied with~$\bfn = (1)$.
Let~$m \ge 2$ be an integer and suppose the desired assertions are satisfied for every~$z$ in~$\borbit$ such that~$q(z) \ge 1$ and~$m(z) \le m - 1$, and for every~$q$ in~$\{0, \ldots, q(z) - 1 \}$.
Let~$z$ be a point in~$\borbit$ such that~$q(z) \ge 1$ and~$m(z) = m$, and let~$q$ be in~$\{0, \ldots, q(z) - 1 \}$.
Note that~$f(z)$ is in~$\borbit$ and $m(f(z)) = m(z) - 1$.
If~$q(f(z)) \le q$, then the pull-back of~$V_q$ by~$f^{m(z) - 1}$ containing~$f(z)$ is conformal.
This implies that~$m(z)$ is in~$\cN(q)$, and then the desired assertions are verified with~$\bfn = (m(z))$.
If~$q(f(z)) \ge q + 1$, then we can apply the induction hypothesis with~$z$ replaced by~$f(z)$; let~$\bfn' = (n_1', \ldots, n_{k'}')$ be the corresponding element of~$\cS(q)$.
If the pull-back of~$U(f(z))$ by~$f$ containing~$z$ is conformal, then the desired assertions are verified with~$\bfn = \bfn'$.
Otherwise, $n' \= m(z) - |\bfn'|$ is in~$\cN(p_{\bfn'}(q))$, and then the desired assertions are verified with~$\bfn = (n_1', \ldots, n_{k'}', n')$.
\end{proof}
\begin{lem}
\label{l:conformal Poincare}
Assume that $f$ is backward contracting.
Then for each $q \ge 0$ and $t > \HDhyp(J(f))$, we have
$$ \sum_{\substack{z \in \borbit \\ q(z) \le q}} |Df^{m(z)}(z)|^{-t}
<
\infty.$$
\end{lem}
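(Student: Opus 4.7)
My plan is to combine a level-wise bound on the series at an intermediate exponent $s \in (\HDhyp(J(f)), t)$, obtained from a conformal measure, with the super-polynomial decay of pull-back diameters proved in \cite{RivShe1004}, in order to promote a uniform level-$n$ bound to a bound summable in $n$ at the exponent $t$.

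The first step is Koebe distortion applied to the conformal inverse branches. For each $z \in \borbit$ with $q(z) \le q$, the defining property of $q(z)$ gives a univalent inverse branch $\varphi_z \colon V_q \to \C$ of $f^{m(z)}$ with $\varphi_z(0) = z$. The sets $V_q$ satisfy $\tB(\tau^q \delta_0/2) \subset V_q \subset \tB(\tau^q \delta_0)$ with $\tau = 2^{-d} \le 1/2$, so there is $\tau_0 > 0$, independent of $q$, with $\mmod(V_q; V_{q+1}) \ge \tau_0$. Koebe then supplies a constant $K > 1$, independent of $z$ and $q$, such that
\[
K^{-1} \diam(\varphi_z(V_{q+1})) \le |Df^{m(z)}(z)|^{-1} \diam(V_{q+1}) \le K \diam(\varphi_z(V_{q+1})),
\]
and $|Df^{m(z)}|$ varies by a factor at most $K$ on $\varphi_z(V_{q+1})$.

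Next I fix $s \in (\HDhyp(J(f)), t)$. Since $0$ is recurrent and lies in $J(f)$, $f$ is not uniformly hyperbolic, so by the references in the introduction there is a conformal measure $\mu_s$ on $J(f)$ of exponent $s$; being a neighborhood of the Julia point $0$, $V_{q+1}$ satisfies $\mu_s(V_{q+1}) > 0$. For each $n \ge 1$ the sets $\{\varphi_z(V_{q+1})\}$, with $z \in f^{-n}(0)$ and $q(z) \le q$, are pairwise disjoint, as distinct components of $f^{-n}(V_{q+1})$. The conformal relation $\mu_s(V_{q+1}) = \int_{\varphi_z(V_{q+1})} |Df^n|^s\,d\mu_s$ combined with the bounded distortion above yields $\mu_s(V_{q+1}) \le K^s |Df^n(z)|^s \mu_s(\varphi_z(V_{q+1}))$, and summing over $z$ together with $\mu_s(\C) \le 1$ produces a constant $C_1 > 0$ with
\[
\sum_{\substack{z \in f^{-n}(0) \\ q(z) \le q}} |Df^n(z)|^{-s} \le C_1, \qquad n \ge 1.
\]

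Finally I invoke the main result of \cite{RivShe1004} for backward contracting maps: the diameters of the pull-backs of $V_q$ by $f^n$ decay super-polynomially in $n$, so via Koebe, $|Df^n(z)|^{-1} \le C_\alpha n^{-\alpha}$ for every $\alpha > 0$, every $z$ with $q(z) \le q$, and every sufficiently large $n = m(z)$. Splitting $|Df^n(z)|^{-t} = |Df^n(z)|^{-s} |Df^n(z)|^{-(t-s)}$ and combining the two bounds gives
\[
\sum_{\substack{z \in f^{-n}(0) \\ q(z) \le q}} |Df^n(z)|^{-t} \le C_1 C_\alpha^{t-s} n^{-\alpha(t-s)},
\]
which is summable in $n$ as soon as $\alpha(t-s) > 1$. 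The most delicate step will be to extract the precise form of the super-polynomial decay from \cite{RivShe1004} for pull-backs of the specific sets $V_q$, and to verify that the available conformal measure $\mu_s$ can indeed be taken with $\mu_s(V_{q+1}) > 0$; both are standard consequences of backward contraction but need to be cited carefully.
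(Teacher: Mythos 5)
Your argument is correct and essentially the paper's own proof: both combine the pairwise disjointness of the conformal pull-backs of $V_{q+1}$, a conformal measure, and Koebe distortion to get a uniform level-$n$ bound at an exponent below $t$, and then invoke the super-polynomial decay of pull-back diameters from \cite[Theorem~A]{RivShe1004} to gain summability in $n$. The only difference is that the paper works directly with the conformal measure of minimal exponent $h=\HDhyp(f)$ (estimating $|Df^{m(z)}(z)|^{-t}$ by $\diam(B(z))^{t-h}\mu(B(z))/\mu(V_{q+1})$ up to constants), whereas you introduce an auxiliary exponent $s\in(h,t)$, which costs an extra citation for the existence of a conformal measure of exponent $s$ but changes nothing of substance.
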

\begin{proof}
Let~$\mu$ denote a conformal measure of~$f$ of exponent~$h \= \HDhyp(f)$.
For each~$z$ in~$\borbit$, let~$B(z)$ denote the pull-back of~$V_{q + 1}$ by~$f^{m(z)}$ that contains~$z$.
By Koebe distortion theorem, there is a constant~$C > 1$ such that for every~$z$ in~$\borbit$ satisfying~$q(z) \le q$, the distortion of~$f^{m(z)}$ on~$B(z)$ is bounded by~$C$.
Then for every such~$z$ we have $|Df^{m(z)}(z)| \ge C^{-1} \diam (B(z))^t$, and by conformality of~$\mu$, we also have
$$\mu(B(z)) \ge C^{-h} |Df^{m(z)}(z)|^{- h} \mu(V_{q + 1}).$$
Since for a fixed integer~$m \ge 1$ the sets~$(B(z))_{z \in f^{-m}(0)}$ are pairwise disjoint, we have
\begin{equation*}
  \begin{split}
\sum_{\substack{z\in f^{-m}(0)\\ q(z) \le q}} |Df^m(z)|^{-t}
& \le
C^t \mu(V_{q + 1})^{-1} \sum_{\substack{z\in f^{-m}(0)\\ q(z) \le q}} \diam (B(z))^{t-h} \mu(B(z))
\\ & \le
\mu(V_{q + 1})^{-1} \max_{z\in f^{-m}(0)} \diam (B(z))^{t-h}.
  \end{split}
\end{equation*}
By~\cite[Theorem~A]{RivShe1004}, for every~$\beta > 0$ the sequence~$\left( \max_{z\in f^{-m}(0)} \diam (B(z)) \right)_{m = 1}^{\infty}$ decreases faster than the sequence~$(m^{- \beta})_{m = 1}^{\infty}$.
The proposition follows summing over~$m \ge 1$.
\end{proof}

\begin{proof}[Proof of Theorem~\ref{thm:reduced}]
One of the implications of the theorem is given by Proposition~\ref{p:direct implication}.
To prove the reverse implication, fix~$t > \HDhyp(f)$ such that$$ \sum_{n = 1}^{\infty} |Df^n(c)|^{- t/d} < + \infty. $$
By part~$2$ of Theorem~\ref{thm:lb2bc} we have
$$ \sum_{q = 0}^{\infty} R(\tau^q \delta_0)^{- t/d} < + \infty. $$
Thus, if we denote by~$C_{\#}$ and~$C_{\&}$ the constants given by Proposition~\ref{prop:poinbad} and Lemma~\ref{l:conformal before bad}, respectively, then there is~$Q \ge 1$ so that
$$ C_{\#} C_{\&}^{-t} \sum_{q = Q + 1}^{\infty} R(\tau^{q-1}\delta_0)^{-t/d}
\le
\frac{1}{2}. $$
Taking~$Q$ larger if necessary, assume that for each~$z$ in~$f^{-1}(0)$ we have~$Q \ge q(z)$.
For every pair of integers~$q \ge 0$ and~$m \ge 1$, put
$$ \cP_t(q, m)
\=
\sum_{\substack{z \in \borbit \\ q(z) = q, m(z) \le m}} |Df^{m(z)}(z)|^{-t}, $$
and note that by Lemma~\ref{l:conformal Poincare} we have
$$ C' \= \sum_{q=0}^Q \sup \{ \cP_t(q, m) : m \ge 1 \}
<
\infty.$$
To prove that~$\cP(0, t)$ is finite, it is enough to prove that for every integer~$m$ we have
\begin{equation}
  \label{e:finite time poincare}
 \sum_{q = 0}^{+ \infty} \cP_t(q, m) \le 2 C'.
\end{equation}
We proceed by induction.
By our choice of~$Q$, for each~$z$ in~$f^{-1}(0)$ we have~$q(z) \le Q$.
So when~$m = 1$ inequality~\eqref{e:finite time poincare} follows from our definition of~$C'$.
Let~$m \ge 2$ be an integer and suppose~\eqref{e:finite time poincare} holds with~$m$ replaced by~$m - 1$.
Using the notation of Lemma~\ref{l:conformal before bad}, for each~$q \ge Q + 1$ we have
\begin{multline*}
\cP_t(q, m)
\\ \le
C_{\&}^{-t} \left( \sum_{\bfn \in \cS(q - 1)} \sum_{\zeta\in Z_{\bfn}(q-1)} |Df^{m(\zeta)}(\zeta)|^{-t} \right)
\cdot
\left( \sum_{\substack{\zeta' \in \borbit \\ m(\zeta') \le m - 1}} |Df^{m(\zeta')}(\zeta')|^{-t} \right).
\end{multline*}
So by Proposition~\ref{prop:poinbad} we have
$$\cP_t(q, m)
\le
C_{\#} C_{\&}^{-t} R(\tau^{q - 1} \delta_0)^{-t/d} \sum_{p=0}^\infty \cP_t(p, m - 1).$$
Summing over~$q \ge Q + 1$, we obtain by the induction hypothesis
$$ \sum_{q=Q + 1}^\infty \cP_t(q, m)
\le
\frac{1}{2} \sum_{p=0}^\infty \cP_t(p, m - 1)
\le
C'.$$
Using the definition of~$C'$, this implies~\eqref{e:finite time poincare}.
This completes the proof of the induction step and of the theorem.
\end{proof}

% For Bibtex bibliography:
% \bibliographystyle{alpha}
% \bibliography{$HOME/papers/0BIB/papers}

\end{document}